\documentclass[1pt]{article}

\usepackage[T1]{fontenc}
\usepackage[utf8]{inputenc}
\usepackage{amsmath,amssymb,amsthm}
\usepackage{bm}
\usepackage{enumitem}
\usepackage{needspace}
\usepackage{xcolor}
\usepackage{geometry}

\IfFileExists{newtxtext.sty}{%
  \IfFileExists{newtxmath.sty}{%
    \usepackage{newtxtext}
    \usepackage{newtxmath}
  }{\usepackage{mathptmx}}%
}{\usepackage{mathptmx}}

\setlist{itemsep=-0.7pt}

\theoremstyle{plain}
\newtheorem{theorem}{Theorem}[section]
\newtheorem{proposition}[theorem]{Proposition}
\newtheorem{corollary}[theorem]{Corollary}
\newtheorem{lemma}[theorem]{Lemma}
\newtheorem*{maintheorem}{Main theorem}

\theoremstyle{definition}

\newtheorem{example}[theorem]{Example}
\newtheorem{remark}[theorem]{Remark}

\def\R{\mathbb{R}}

\def\eps{\varepsilon}
\def\la{\langle}
\def\ra{\rangle}
\def\leq{\leqslant}
\def\le{\leq}
\def\geq{\geqslant}
\def\ge{\geq}

\newcommand{\x}{\bm{x}}

\newcommand{\0}{\bm{0}}     

\begin{document}

\title{A Spectral Approach to Fixed Points in Product Spaces}

\author{
  Aleksander \'Cwiszewski\\
  \small Faculty of Mathematics and Computer Science, Nicholas Copernicus University, Toru\'n, Poland
  \and
  Wojciech Kryszewski\\
  \small Institute of Mathematics, Technical University of Lodz, {\L}\'od\'z, Poland
}

\date{}
\maketitle

\begin{abstract}
We study fixed points of mappings on finite products of Banach spaces whose coordinatewise Lipschitz estimates are controlled by a nonnegative matrix $A$. Our main result states that, if every coordinate space has normal structure and $r(A)\leq1$, then every such self-mapping of a nonempty weakly compact convex set has a fixed point. The result is a direct generalization of the
Browder--G\"ohde--Kirk principle. The proof combines classical nonexpansive fixed point theory with Frobenius normal form, passing successively to coordinate projections and invariant fibres. As preparation, the classical reduction of Perov's theorem to the Banach contraction principle through adapted lattice norms is revisited. Applications include a Dirichlet boundary value problem for systems of abstract second order differential equations and a periodic problem for ODE systems satisfying one-sided matrix estimates.
\end{abstract}

\noindent\textbf{Keywords:} fixed point; nonexpansive mapping; spectral radius; Perov contraction.\\
\textbf{MSC:} 47H10, 47H09, 54H25

\section{Introduction}

Fixed point problems arising from systems of equations
frequently admit componentwise estimates in which the
dependence between different components is described by
a nonnegative matrix. To illustrate this situation, let
\[
X:=\prod_{i=1}^N X_i,
\]
where $(X_i,\|\cdot\|_i)$, $i=1,\ldots,N$, are real or
complex Banach spaces, and let
\[
f=(f_1,\ldots,f_N):X\to X
\]
be Fr\'echet differentiable. Suppose that its partial
derivatives are uniformly bounded, that is,
\[
a_{ij}:=\sup_{x\in X}
\|\partial_j f_i(x)\|_{\mathcal L(X_j,X_i)}
<\infty,
\qquad i,j=1,\ldots,N.
\]
Then the mean value inequality yields
\[
\|f_i(x)-f_i(y)\|_i
\leq \sum_{j=1}^N a_{ij}\|x_j-y_j\|_j,
\qquad x,y\in X,\quad i=1,\ldots,N.
\]
Thus, the coefficient $a_{ij}$ provides an upper bound
for the influence of the $j$-th variable on the $i$-th
component of $f$.

More generally, consider a mapping $T=(T_1,\ldots,T_N)$
satisfying an estimate of the form
\[
\|T_i(x)-T_i(y)\|_i
\leq \sum_{j=1}^N a_{ij}\|x_j-y_j\|_j,
\qquad i=1,\ldots,N,
\]
where $A=[a_{ij}]$ is a nonnegative matrix.
Such inequalities arise naturally in the study of
coupled nonlinear equations and provide a natural framework for fixed point results.

When the spectral radius $r(A)$ of $A$ is strictly less
than one, the classical contraction principle of
Perov~\cite{Perov} ensures the existence and uniqueness of a fixed
point in the appropriate complete setting. The purpose of this paper is to establish a fixed
point theorem under the condition
$r(A)=1$. In this case the situation becomes substantially different. The usual contraction argument
is then no longer available in general, and additional
geometric assumptions are needed to guarantee fixed
point existence. Our result applies to arbitrary weakly
compact convex invariant subsets of finite products
of Banach spaces with normal structure.

Recall that a Banach space $(Y,\|\cdot\|)$ has
\emph{normal structure} if every nonempty bounded
closed convex subset $D\subset Y$ with positive
diameter contains a point $z\in D$ such that
\[
\sup_{y\in D}\|z-y\|<\operatorname{diam}(D),
\]
where
\[
\operatorname{diam}(D)
:=\sup_{x,y\in D}\|x-y\|.
\]
In particular, every uniformly convex Banach
space has normal structure
(see \cite[Chap.~6]{Kirk-Goebel}).

Our main result can now be stated as follows.

\begin{maintheorem}[Theorem~\ref{critical-main}]
Let $N\geq1$, and let $(X_i,\|\cdot\|_i)$,
$i=1,\ldots,N$, be real or complex Banach spaces
with normal structure. Let
\[
K\subset X_1\times\cdots\times X_N
\]
be nonempty, weakly compact and convex, and let
$T=(T_1,\ldots,T_N):K\to K$.
Suppose that there exists a nonnegative matrix
$A=[a_{ij}]_{i,j=1}^N$ such that
\[
\|T_i(x)-T_i(y)\|_i
\leq \sum_{j=1}^N a_{ij}\|x_j-y_j\|_j,
\quad x,y\in K,\quad i=1,\ldots,N.
\]
If the spectral radius
\[
r(A):=\max\{|\lambda|:
\lambda\text{ is an eigenvalue of }A\}
\]
satisfies $r(A)\leq1$, then $T$ has a fixed point
in $K$.
\end{maintheorem}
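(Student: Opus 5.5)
Our plan is to argue by induction on $N$, using the Frobenius normal form of $A$ to pass to coordinate projections and invariant fibres. The base case is $N=1$. Here $r(A)=a_{11}\le 1$, so $T$ is nonexpansive on a weakly compact convex subset of a space with normal structure. Kirk's theorem (the Browder--G\"ohde--Kirk principle) then gives a fixed point.

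For the inductive step we first treat the irreducible case. If $A$ is irreducible, Perron--Frobenius gives a vector $v$ with strictly positive entries and $v^{T}A=r(A)v^{T}\le v^{T}$. Weight the coordinates by $v$ and set
\[
\|x\|_v:=\sum_i v_i\|x_i\|_i .
\]
Then
\[
\|Tx-Ty\|_v\le \sum_j (v^TA)_j\|x_j-y_j\|_j\le \|x-y\|_v ,
\]
so $T$ is nonexpansive for $\|\cdot\|_v$. The catch is that an $\ell^1$-type product of spaces with normal structure need not have normal structure. We would therefore not invoke Kirk directly. Instead we rerun Kirk's minimal-set argument. By Zorn's lemma and weak compactness, take a minimal nonempty closed convex $T$-invariant set $M\subset K$. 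Minimality gives $\overline{\mathrm{co}}\,T(M)=M$. Let $P_i$ denote the projection onto $X_i$ and consider the coordinate diameters $d_i:=\operatorname{diam}P_i(M)$. If some $d_i>0$, normal structure of $X_i$ provides a nondiametral point of $P_i(M)$. Using the weights and the identity $\overline{\mathrm{co}}\,T(M)=M$, the usual Chebyshev-radius argument, carried out componentwise, should produce a proper invariant subset. The relevant set has the form
\[
\{x\in M:\ \|x_i-y_i\|_i\le \rho_i \text{ for all } y\in M,\ i\}
\]
with a suitable radius vector $\rho$. The point is that $\rho$ must be taken proportional to the Perron vector of $A$ rather than of $A^T$, so that $A\rho\le\rho$ keeps the set invariant. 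This contradicts minimality, so every $d_i=0$, and $M$ is a single fixed point. Choosing $\rho$ consistently with the Perron eigenvector is the main obstacle. It requires that the radii be strictly below the diameters in every coordinate at once, which is where irreducibility is needed.

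For reducible $A$ we write $A$ in block upper-triangular Frobenius form after permuting coordinates, with a last irreducible block $A_{kk}$ whose coordinates do not depend on the others. Let $J$ be that index set. Then $T_J$ depends only on $x_J$, and the projection $P_J(K)$ is weakly compact and convex. Moreover $T_J$ induces a self-map of $P_J(K)$, because $T_J(x)$ is determined by $x_J$. The block $A_{kk}$ has $r\le1$, so the irreducible case yields $z\in P_J(K)$ with $T_J(z)=z$. Next consider the fibre $K_z:=\{x\in K: x_J=z\}$. It is nonempty, weakly compact and convex, and it is $T$-invariant. On $K_z$ the remaining coordinates satisfy the estimate with the submatrix $A'$ obtained by deleting $J$, and $r(A')\le r(A)\le1$. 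The induction hypothesis then applies to $K_z$, viewed inside the product of the remaining spaces with $z$ frozen, and gives a fixed point of $T$.
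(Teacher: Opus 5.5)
Your treatment of the reducible case is correct. It is the paper's Steps 2--4 run in the opposite order: you solve the last Frobenius block and fibre over it, inducting on $N$, whereas the paper solves the last $m-1$ blocks and fibres over $A_{11}$. The gap is in the irreducible case, exactly at the step you call the main obstacle. Let $u\gg0$ with $Au\le u$ and $\delta:=\max_i d_i/u_i>0$. To contradict minimality of $M$ you need some $t<\delta$ for which
\[
M_{tu}:=\{x\in M:\ \|x_i-y_i\|_i\le t u_i\ \text{for all } y\in M,\ i=1,\ldots,N\}
\]
is nonempty. In other words, you need a single point of $M$ that is nondiametral, relative to the weights, in every critical coordinate simultaneously.

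Normal structure of $X_i$ only gives, coordinate by coordinate, some point $x^{(i)}\in M$ whose $i$-th component is nondiametral in $P_i(M)$. Since $M$ is not a product set, these points cannot be glued together coordinatewise. Irreducibility does not supply the simultaneous point either; it only supplies $u\gg0$. In fact, in a minimal set one has $d\le Ad$, which for irreducible $A$ with $r(A)\le1$ forces $d$ to be a nonnegative multiple of $u$. So irreducibility makes every coordinate critical at once rather than resolving the difficulty. As written, the irreducible case, which is the core of the theorem, is therefore not proved.

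The missing idea is an averaging step, which is precisely the normal structure of finite weighted $\ell^\infty$-products (Belluce--Kirk--Steiner, which the paper cites). Let $I:=\{i:\ d_i=\delta u_i\}$. For each $i\in I$ pick $x^{(i)}\in M$ with $s_i:=\sup_{y\in M}\|x^{(i)}_i-y_i\|_i<d_i$, and put $\bar x:=|I|^{-1}\sum_{i\in I}x^{(i)}\in M$.

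Convexity of $z\mapsto\sup_{y\in M}\|z-y_i\|_i$ gives, for $i\in I$,
\[
\sup_{y\in M}\|\bar x_i-y_i\|_i\le|I|^{-1}\bigl(s_i+(|I|-1)d_i\bigr)<\delta u_i .
\]
For $i\notin I$ the bound $d_i<\delta u_i$ holds trivially. Hence $\bar x\in M_{tu}$ for some $t<\delta$. Moreover, $M_{tu}$ is closed and convex, and it is $T$-invariant because $A(tu)\le tu$ and $\overline{\mathrm{co}}\,T(M)=M$. Its $\|\cdot\|_{\infty,u}$-diameter is at most $t<\delta$, so $M_{tu}$ is a proper subset, contradicting minimality.

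With this inserted, your minimal-set argument is just Kirk's proof for the norm $\|x\|_{\infty,u}=\max_i\|x_i\|_i/u_i$, that is, the paper's Theorem~\ref{critical-general-K}. The $\ell^1$-norm built from the left Perron vector plays no role in it and can be dropped.
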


For $N=1$ and $A=[1]$, this theorem reduces to
the classical fixed point theorem for nonexpansive
mappings on weakly compact convex subsets of
Banach spaces with normal structure
(see Theorem~\ref{BGK}),
see Browder~\cite{Browder},
G\"ohde~\cite{Gohde} and Kirk~\cite{Kirk}. If all the coordinate spaces are
uniformly convex, then the weak compactness
assumption on $K$ can be replaced by the requirement
that $K$ be nonempty, bounded, closed and convex.
Indeed, finite products of reflexive Banach spaces
are reflexive. The result thus extends the scalar fixed point existence principle to mappings controlled by a nonnegative matrix.

The relation of our result to Perov's fixed point theorem
\cite{Perov} lies in extending the spectral condition
$r(A)<1$ to $r(A)\leq1$. Perov's theorem provides the existence of a unique
fixed point and convergence of successive approximations
in an arbitrary complete vector-valued metric space.
At the critical threshold $r(A)=1$, our theorem
preserves the existence conclusion under additional
geometric assumptions, namely weak compactness,
convexity and normal structure. However, neither uniqueness nor convergence of successive
approximations is guaranteed when $r(A)=1$.

It is worth observing that the Perov theorem may be
derived from the classical Banach contraction principle,
as shown, in particular, by Jachymski and
Klima~\cite{Jachymski} and Cvetkovi\'c~\cite{Cvet};
a certain argument can also be found in
Khamsi~\cite{Khamsi}.

The weighted lattice norm approach employed in our
analysis has its roots in the classical theory of
nonnegative matrices and positive operators; see,
for example, Horn and Johnson~\cite{Horn} and
Berman and Plemmons~\cite{Berman}. Particularly
relevant in this context is the work of Rheinboldt
and Vandergraft~\cite{Van}, where weighted monotone
norms and their relationship with spectral properties
of positive operators are studied in the more general
setting of finite-dimensional ordered spaces.

For the reader's convenience, we provide
self-contained proofs of the auxiliary results
from positive-matrix theory needed in our arguments,
without claiming novelty for these classical tools
or for the strict contraction case.

It should be emphasized that our main theorem applies,
in particular, to matrices of the form
$$A=\begin{pmatrix}1&c\\0&1\end{pmatrix},\qquad c>0,$$
for which
$$r(A)=1,\qquad A^n=\begin{pmatrix}
1&nc\\0&1
\end{pmatrix},\qquad n\geq1.$$
Since the powers of $A$ are unbounded, there is no
norm on $\mathbb R^2$ whose induced operator norm
satisfies $\|A\|\leq1$. Consequently, the given
matrix Lipschitz estimate cannot, in general, be
reduced to a scalar nonexpansive estimate by means
of a single equivalent norm. This illustrates why
the treatment of reducible matrices requires an
argument beyond global scalarization.

The proof proceeds by distinguishing cases of the irreducible
and reducible matrix $A$ and combines Perron--Frobenius theory of nonnegative matrices with geometric fixed point arguments.
For an irreducible
matrix $A$ with spectral radius one, a strictly positive
Perron eigenvector determines a weighted supremum
norm in which the mapping becomes nonexpansive.
The theorem of Belluce--Kirk--Steiner
\cite[Theorem~2.1]{Belluce} implies that the
product space preserves normal structure, allowing
the application of the Browder--G\"ohde--Kirk
fixed point principle. The general reducible case is treated through the Frobenius normal form providing a block triangular decomposition, allowing the fixed point problem to be resolved successively on the corresponding invariant fibres. The argument does not require $K$ to have
a Cartesian product structure.

Matrix estimates and related extensions of Perov's
fixed point theorem have attracted considerable
attention; see, for example,
\cite{Petrusel,Pascale} and the extensive
bibliography in \cite{Cvet}.
Such estimates have also played an important role
in the study of systems of nonlinear equations
and integral operators; see, in particular,
Precup~\cite{Precup1,Precup2}.

We illustrate the applicability of our main result
to Dirichlet boundary value problems for systems
of abstract second-order differential equations
and to periodic problems for abstract differential
equations subject to dissipative matrix-valued
one-sided Lipschitz conditions.
Notably, these applications require neither
compactness nor weak continuity of the nonlinear
operators involved.

The paper is organized as follows.
Section~\ref{strict-case} introduces the notation
and basic definitions, recalls Perov's fixed point
theorem, and develops the lattice-norm techniques
used in the strict spectral case $r(A)<1$.
Section~\ref{recursive} addresses the critical
case $r(A)=1$ and contains the proof of
Theorem~\ref{critical-main}, together with
its consequences and an example illustrating
the sharpness of the spectral condition.
Section~\ref{applications} presents applications
to Dirichlet boundary value problems and
periodic solutions of abstract differential
equations.
Finally, the Appendix collects the classical
results from positive-matrix theory used in
the proofs, including the Perron--Frobenius
theorem and Frobenius normal form, together
with auxiliary material on semi-inner products
in Banach spaces and comparison principles
for differential inequalities.

\section{Notation, preliminaries and Perov's theorem}\label{strict-case}

\subsection{Notation and definitions}\label{basic-notation}

\subsubsection*{Order and matrix notation.} We use the coordinatewise order on $\R^N$: for $u,v\in\R^N$
\begin{equation}\label{order}
u\leq v\ \ \Longleftrightarrow \ \  u_i\leq v_i,
\ \ \mbox{ for each }  i=1,\ldots,N.
\end{equation}
We write $u<v$ if $u\leq v$ and $u\neq v$, and $u\ll v$ if $u_i<v_i$ for every $i$. In particular, $u\gg0$ means that every coordinate is strictly positive. The same order conventions apply entrywise to matrices: $A\geq0$ or $A\in\R_+^{N\times N}$ means that $A$ is nonnegative (all its elements are nonnegative real numbers), and $A\gg 0$  means that $A$ is positive. A vector $u\gg0$ satisfying $Au\leq u$ is called a \emph{positive subinvariant vector}. For a real square matrix $A$, its transpose is denoted by $A^T$, its spectrum by $\sigma(A)$, and its spectral radius by
$$
r(A):=\max\{|\lambda|:\lambda\in\sigma(A)\}.
$$
\indent A matrix is \emph{reducible} if the coordinate subspace $\operatorname{span}\{e_j:j\in J\}$ is invariant under $A$ for some nonempty proper subset $J\subset\{1,\ldots,N\}$; otherwise it is \emph{irreducible}; here $e_j$ are the standard coordinate vectors.

\subsubsection*{Lattice norms and subordinate matrix norms.}
A norm $|\cdot|_*$ on $\R^N$ is a \emph{lattice norm}, also called a \emph{monotone norm}, if
$$
|u|\leq|v| \  \ \Longrightarrow\ \  |u|_*\leq|v|_*,
\ \mbox{ for all } \ \ u,v\in\R^N,
$$
where $|\cdot | :\R^N\to \R_+^N$, with $\R_{+}=[0,+\infty)$, is defined by
$$
|z|=(|z_1|,\ldots,|z_N|), \ \ \mbox{ for any } z=(z_1,\ldots,z_N).
$$
Examples include the usual norms $|\cdot|_p$, $1\leq p\leq\infty$, and the weighted norms
$$
|z|_{\infty,u}:=\max_{1\leq i\leq N}\frac{|z_i|}{u_i},
\qquad
|z|_{1,w}:=\sum_{i=1}^Nw_i|z_i|,
\qquad u,w\gg0.
$$
For a given lattice norm $|\cdot|_*$ on $\R^N$,  the corresponding subordinate matrix norm is
\begin{equation}\label{subo}
|A|_*:=\sup_{z\neq0}\frac{|Az|_*}{|z|_*}.
\end{equation}


\subsubsection*{Matrix Lipschitz conditions.} A \emph{vector-valued metric} on a set $X$ is a function $d:X\times X\to\R_+^N$ such that $d(x,y)=0$ if and only if $x=y$, $d(x,y)=d(y,x)$, and $d(x,z)\leq d(x,y)+d(y,z)$ componentwise. Equivalently, $d=(d_1,\ldots,d_N)$ is a finite separating family of pseudometrics, i.e. each $d_i$ satisfies the metric axioms except possibly separation, and $d_i(x,y)=0$, for every $i = 1,\ldots, N$, implies $x=y$.

Every lattice norm $|\cdot|_*$ on $\R^N$ yields an ordinary metric $d_*: X\times X\to \R_{+}$, given by
\begin{equation}\label{metric*}
d_*(x,y):=\big|d(x,y)\big|_*,\qquad x,y\in X.
\end{equation}

In order to consider convergence, Cauchy sequences and completeness, one may use any of the scalarizations above, since all norms on $\R^N$ are equivalent.

A mapping $T:X\to X$ satisfies a \emph{matrix Lipschitz condition} if, for a nonnegative matrix $A=[a_{ij}]$,
\begin{equation}\label{lipf1}
d_i(Tx,Ty)\leq\sum_{j=1}^N a_{ij}d_j(x,y), \ \ \mbox{ for each } \ \ i=1,\ldots,N.
\end{equation}
Equivalently,
\begin{equation}\label{lipf2}
d(T(x),T(y))\leq A\,d(x,y),\qquad x,y\in X.
\end{equation}

\noindent Observe that the matrix estimate retains separate estimates for the interactions between components; a single scalar Lipschitz constant generally loses this information.

\subsubsection*{Products of Banach spaces.}
Let us consider a Cartesian product
\begin{equation}\label{product}
X:=\prod_{i=1}^N X_i,
\end{equation}
where $(X_i,\|\cdot\|_i)$, $i=1,\ldots, N$, are real or complex Banach spaces over the same scalar field. We use the vector-valued norm and metric
\begin{align}
\|x\|&:=(\|x_1\|_1,\ldots,\|x_N\|_N),\label{vnorm}\\
d(x,y)&:=(\|x_1-y_1\|_1,\ldots,\|x_N-y_N\|_N)
\label{vmetric}
\end{align}
defined for any  $x=(x_1, \ldots, x_N)\in X$ and $y
=(y_1, \ldots, y_N)\in X$.

A lattice norm $|\cdot|_*$ on $\R^N$ gives the norm
\begin{equation}\label{norm*1}
\|x\|_*:=\bigl|(\|x_1\|_1,\ldots,\|x_N\|_N)\bigr|_*.
\end{equation}
For $T=(T_1,\ldots,T_N):K\to K$, $K\subset X$, we can consider the matrix Lipschitz condition \begin{equation}\label{Alip1}
\|T(x)-T(y)\|\leq A\|x-y\|, \ \ \mbox{ for all } \ \ x,y\in K,
\end{equation}
or, componentwise,
\begin{equation}\label{Alip2}
\|T_i(x)-T_i(y)\|_i\leq\sum_{j=1}^N a_{ij}\|x_j-y_j\|_j,
\ \ \mbox{ for all } \ \ i=1,\ldots,N.
\end{equation}
Thus $a_{ij}$ bounds the influence of the $j$-th variable on the $i$-th component.

All product norms of the form \eqref{norm*1} are equivalent. Boundedness, closedness and completeness are therefore independent of this choice. Equivalent norms also have the same continuous linear functionals and the same weak topology, so weak compactness is unchanged. Convexity is an algebraic property and is likewise independent of the norm. In particular, $K\subset X$ is bounded if and only if each coordinate is bounded on $K$.

\subsubsection*{Normal structure and nonexpansiveness.}
For a nonempty bounded set $D$ in a normed space $X$ with the norm $\|\cdot\|_X$, we define its diameter by
$$
\operatorname{diam}(D):=\sup_{x,y\in D}\|x-y\|_X.
$$
A point $z\in D$ is \emph{diametral} if $\sup_{y\in D}\|z-y\|_X=\operatorname{diam}(D)$. A nonempty bounded closed convex set $C$ has \emph{normal structure} if every closed convex subset $D\subset C$ with positive diameter contains a point that is not diametral. A Banach space is said to have normal structure if each of its nonempty bounded closed convex subsets has normal structure. Unlike weak compactness, normal structure need not be preserved by an arbitrary equivalent renorming.
\begin{theorem}[Browder--G\"ohde--Kirk {\cite{Browder,Gohde,Kirk}}]\label{BGK}
Let $Y$ be a Banach space, let $C\subset Y$ be a nonempty convex
set, and let $F:C\to C$ be nonexpansive, i.e.
$$
\|F(x)-F(y)\|_Y\leq\|x-y\|_Y,
\qquad x,y\in C.
$$
Then $F$ has a fixed point in each of the following situations
\begin{enumerate}
\item[\rm(i)]
$Y$ is uniformly convex and $C$ is bounded and closed;

\item[\rm(ii)]
$C$ is weakly compact and has normal structure.
\end{enumerate}
\end{theorem}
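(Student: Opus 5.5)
This is the classical Browder--G\"ohde--Kirk theorem, and the plan is to prove part (ii) via Kirk's minimal-invariant-set argument and then deduce (i) from it.

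\emph{Step 1: a minimal invariant set.} Assume $C$ is weakly compact with normal structure (we may take $C$ closed, since weak compactness gives this). Consider the family $\mathcal F$ of nonempty closed convex subsets $D\subset C$ with $F(D)\subset D$. Closed convex sets are weakly closed, so any chain in $\mathcal F$ is a family of weakly closed subsets of the weakly compact set $C$ with the finite intersection property. Hence the intersection of the chain is nonempty, and it is again closed, convex and invariant. By Zorn's lemma $\mathcal F$ has a minimal element $D$. Minimality gives $D=\overline{\operatorname{conv}}\,F(D)$: the right-hand side lies in $D$, and its image under $F$ lies in $F(D)$, which it contains, so it belongs to $\mathcal F$.

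\emph{Step 2: $\operatorname{diam}(D)=0$ (the key step).} Suppose instead $\delta:=\operatorname{diam}(D)>0$. Normal structure gives $z\in D$ with
$$r:=\sup_{y\in D}\|z-y\|_Y<\delta.$$
Set
$$D_0:=\{x\in D:\ \sup_{y\in D}\|x-y\|_Y\le r\}=D\cap\bigcap_{y\in D}\bar B(y,r).$$
This set is nonempty (it contains $z$), closed and convex. To show it is invariant, take $x\in D_0$. For every $y\in D$ we have $\|F(x)-F(y)\|_Y\le\|x-y\|_Y\le r$, so $F(D)\subset\bar B(F(x),r)$. The ball is closed and convex, so
$$D=\overline{\operatorname{conv}}\,F(D)\subset\bar B(F(x),r),$$
which means $F(x)\in D_0$. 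Minimality then forces $D_0=D$, so $\operatorname{diam}(D)\le r<\delta$, a contradiction. Therefore $D$ is a single point, and that point is fixed by $F$. The main obstacle is exactly this invariance of $D_0$, which relies on the identity $D=\overline{\operatorname{conv}}\,F(D)$ from Step 1.

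\emph{Step 3: part (i).} Since (i) assumes convexity, we may also take $C$ closed, bounded and convex. A uniformly convex space is reflexive (Milman--Pettis), so $C$ is weakly compact. Uniformly convex spaces have normal structure, as cited from \cite[Chap.~6]{Kirk-Goebel}. Concretely, for distinct $x,y\in D$ with $\|x-y\|_Y$ close to $\delta$, the midpoint $(x+y)/2$ is non-diametral by the modulus of convexity. Part (ii) now applies.

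Alternatively, one could obtain (i) directly: take $F_t=tF+(1-t)x_0$, which has a unique fixed point $x_t$ by Banach's principle, and then use demiclosedness of $I-F$ along a weakly convergent subsequence as $t\to1$. The route through (ii) is shorter.
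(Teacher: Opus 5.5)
Your proof is correct. Steps 1--2 are Kirk's minimal-invariant-set argument, and the identity $D=\overline{\operatorname{conv}}\,F(D)$ is exactly what makes $D_0$ invariant. Step 3 is the standard reduction of (i) to (ii).

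The paper gives no proof of its own here. The theorem is quoted from Browder, G\"ohde and Kirk, and the paper uses your reduction (uniform convexity gives reflexivity and normal structure) in Corollary~\ref{uniformly-convex-critical}.

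Two points you leave implicit are harmless:
\begin{itemize}
\item Weak compactness of $C$ also gives boundedness, so $\delta<\infty$.
\item In Step 3, any $x,y\in D$ with $\|x-y\|_Y\ge\delta/2$ works, not only pairs at distance close to $\delta$. Then $\bigl\|\tfrac{x+y}{2}-w\bigr\|_Y\le\bigl(1-\delta_Y(\tfrac12)\bigr)\delta$ for all $w\in D$, where $\delta_Y$ is the modulus of convexity.
\end{itemize}

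Of your two routes to (i), the one through (ii) is also the one the paper actually needs. In Theorem~\ref{critical-general-K}, part (ii) is applied to the weighted $\ell^\infty$-product norm, which is not uniformly convex. Your $F_t$/demiclosedness alternative covers only the uniformly convex case, so it would not serve that use.
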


\subsection{Scalarization and lattice renorming}

Let $|\cdot|_*$ be a lattice norm in $\R^N$. Since all norms on $\R^N$ are equivalent, completeness of the vector-valued metric space $(X,d)$ implies completeness of $(X,d_*)$ with the metric given by \eqref{metric*}. From \eqref{lipf2} and the lattice property we obtain, for any $x,y\in X$,
$$
d_*\bigl(T(x),T (y)\bigr)=\big|d(T(x),T(y))\big|_*\leq
\big|A\,d(x,y)\big|_*\leq|A|_*\,d_*(x,y),
$$
where $|A|_*$ is the matrix norm defined in \eqref{subo}. Similarly, see \eqref{norm*1}, condition \eqref{Alip1} implies
\begin{equation}\label{lip}
\|T(x)-T(y)\|_*
\leq
|A|_*\,\|x-y\|_*, \ \ \mbox{ for all }\ \ x,y\in K.
\end{equation}
Thus scalarization replaces the matrix Lipschitz estimate by an ordinary Lipschitz estimate. Since
$r(A)\leq |A|_*$ for every subordinate matrix norm, the spectral radius is a lower bound for all Lipschitz constants obtained in that way.

The natural question is therefore whether one can choose a lattice norm so that the subordinate norm of $A$ is as close as desired to $r(A)$. The answer is affirmative.

\begin{proposition}\label{lattice-renorming}
Let $A\in\R^{N\times N}$ be nonnegative. For every $\eps>0$ there exists a lattice norm $|\cdot|_\eps$ in $\R^N$ such that
$$|A|_\eps<r(A)+\eps.$$
\end{proposition}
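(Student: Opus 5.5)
The plan is to use a weighted supremum norm $|z|_{\infty,u}$ with a strictly positive weight vector $u$. This norm is a lattice norm, and its subordinate norm has an explicit formula in terms of $A$ and $u$. The required weight will come from a perturbed matrix with strictly positive entries, where Perron--Frobenius theory applies directly.

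First I record the formula for the subordinate norm. For $u\gg0$ and $A\geq0$, if $|z|_{\infty,u}\leq1$ then $|z_j|\leq u_j$ for every $j$. Hence
$$
\frac{|(Az)_i|}{u_i}\leq\frac{(Au)_i}{u_i},
$$
and equality is attained at $z=u$. Therefore
$$
|A|_{\infty,u}=\max_i\frac{(Au)_i}{u_i}.
$$
In particular, if $Au\leq\mu u$ then $|A|_{\infty,u}\leq\mu$. So it suffices to find, for each $\eps>0$, a vector $u\gg0$ with $Au\leq\mu u$ for some $\mu<r(A)+\eps$.

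Next I perturb $A$. Let $J$ be the matrix of all ones and set $A_\delta:=A+\delta J$ for $\delta>0$. Then $A_\delta\gg0$. By the Perron--Frobenius theorem for positive matrices (collected in the Appendix), $A_\delta$ has a Perron eigenvector $u_\delta\gg0$ with $A_\delta u_\delta=r(A_\delta)u_\delta$. Since $A\leq A_\delta$ entrywise, we get
$$
Au_\delta\leq A_\delta u_\delta=r(A_\delta)u_\delta ,
$$
and hence $|A|_{\infty,u_\delta}\leq r(A_\delta)$.

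It remains to show $r(A_\delta)\to r(A)$ as $\delta\to0^+$. This holds because the spectral radius is continuous in the matrix entries: the eigenvalues depend continuously on the coefficients of the characteristic polynomial. Choosing $\delta$ small enough gives $r(A_\delta)<r(A)+\eps$, which finishes the proof with $|\cdot|_\eps:=|\cdot|_{\infty,u_\delta}$.

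The main obstacle is this continuity step. If I want to avoid root-continuity, I would use an alternative route. By Gelfand's formula, pick $m$ with $|A^m|_\infty^{1/m}<r(A)+\eps/2$. Then define
$$
u:=\sum_{k=0}^{m-1}\lambda^{-k}A^k\mathbf 1,\qquad \lambda:=r(A)+\eps .
$$
This $u$ satisfies $u\geq\mathbf 1\gg0$. A direct computation gives
$$
Au\leq\lambda u+\lambda^{1-m}A^m\mathbf 1 .
$$
The error term is small relative to $u$, since $u\geq\mathbf 1$ and $A^m\mathbf 1\leq|A^m|_\infty\mathbf 1$. This bound yields $|A|_{\infty,u}<r(A)+\eps$ after enlarging $m$ suitably, and it avoids Perron--Frobenius entirely.
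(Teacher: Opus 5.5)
Your main argument is correct, but it takes a different route from the paper. The paper never looks for a weight vector. It fixes an arbitrary lattice norm $|\cdot|_0$, sets $\rho=r(A)+\eps/2$, and defines the extremal norm $|z|_\eps=\sup_{k\ge0}\bigl|A^k|z|\bigr|_0/\rho^k$. Finiteness needs only that $|A^k|_0/\rho^k$ is bounded, the lattice property comes from $A^k\ge0$, and the index shift $k\mapsto k+1$ gives $|Az|_\eps\le\rho|z|_\eps$. This is deliberately free of Perron--Frobenius theory, as the remark after Theorem~\ref{irr} points out.

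Your route gives a more concrete conclusion: a weighted $\ell^\infty$ norm $|\cdot|_{\infty,u}$, which is what the paper obtains later in Theorem~\ref{irr}(i). The price is Perron's theorem for $A+\delta J$ plus continuity of the spectral radius. You only need upper semicontinuity there, and it follows from Gelfand's formula: $r(A_\delta)\le|A_\delta^m|_\infty^{1/m}\to|A^m|_\infty^{1/m}$ as $\delta\to0^+$. The paper reaches the same weighted norm without Perron--Frobenius by taking $u=(I-A/\rho)^{-1}e$, which gives $Au=\rho(u-e)\ll\rho u$. Your fallback route is a truncation of exactly this Neumann series.

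The fallback, as written, has a bookkeeping slip. The exact identity is $Au=\lambda u-\lambda\mathbf 1+\lambda^{1-m}A^m\mathbf 1$, and you dropped the term $-\lambda\mathbf 1$. What remains gives only $|A|_{\infty,u}\le\lambda\bigl(1+\lambda^{-m}|A^m|_\infty\bigr)$. That bound is never smaller than $\lambda=r(A)+\eps$, so enlarging $m$ cannot produce the strict inequality. Keep the dropped term instead. Since $A^m\mathbf 1\le|A^m|_\infty\mathbf 1\ll\lambda^m\mathbf 1$, you get $Au\ll\lambda u$, and hence $|A|_{\infty,u}=\max_i(Au)_i/u_i<\lambda$ already for the single $m$ you chose from Gelfand's formula.
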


\begin{proof} Let $\rho=r(A)+\frac{\eps}{2}$ and let $|\cdot|_0$ be any lattice norm on $\R^N$. Define
\begin{equation}\label{extremal-lattice}
|z|_\eps := \sup_{k\geq 0} \frac{\big|A^k|z|\big|_0}{\rho^k},
\qquad z\in\R^N,
\end{equation}
where $|z|=(|z_1|,\ldots,|z_N|)$. Since $r(A)/\rho<1$, the sequence of
$\big|A^k\big|_0/\rho^k$, $k\geq 1$,  is bounded, and hence \eqref{extremal-lattice} is finite. The term $k=0$ gives
$$
|z|_\eps\geq |z|_0,
$$
so $|z|_\eps=0$ implies $z=0$. Absolute homogeneity of $|\cdot |_\eps$ is immediate. Moreover, for any $z_1, z_2\in\R^N$,
$$
|z_1+z_2|\leq |z_1|+|z_2|,
$$
and, since $A^k\geq0$, we have
$$
A^k|z_1+z_2|
\leq
A^k|z_1|+A^k|z_2|.
$$
The lattice property of $|\cdot|_0$ therefore yields
$$
|z_1+z_2|_\eps
\leq
|z_1|_\eps+|z_2|_\eps.
$$
If $|z_1|\le |z_2|$, then, since $A^k\ge0$,
$$
A^k|z_1|\le A^k|z_2|,
$$
and hence
$$
|z_1|_\eps\le |z_2|_\eps.
$$
Thus $|\cdot|_\eps$ is a lattice norm.

\medskip

Finally, since $A\geq0$, for any $z\in\R^N$,
$$
|Az|\leq A|z|,
$$
and hence
\begin{align}\label{seq-ineq-norm}
|Az|_\eps=\sup_{k\geq0}\frac{\big|A^k|Az|\big|_0}{\rho^k}
\leq\sup_{k\geq0}\frac{\big|A^{k+1}|z|\big|_0}{\rho^k}
=\rho\sup_{j\geq1}\frac{\big|A^j|z|\big|_0}{\rho^j}
\leq\rho |z|_\eps.
\end{align}
Thus $|A|_\eps\leq\rho=r(A)+\frac{\eps}{2}<r(A)+\eps.$
\end{proof}
Proposition \ref{lattice-renorming} asserts that
\begin{equation}\label{inf-lattice}
\inf\bigl\{|A|_*:\ |\cdot|_*\text{ is a lattice norm on }\R^N\bigr\}=r(A).
\end{equation}
It corresponds to Goebel~\cite{Goebel} in a nonlinear setting. The infimum in \eqref{inf-lattice} is attained in some special cases.
\begin{proposition}\label{os}
Let $A\in \R^{N\times N}$ be nonnegative with $r(A)>0$.  The following are equivalent
\begin{enumerate}
\item[{\em (i)}] $|A|_*=r(A)$ for some lattice norm $|\cdot |_*$;
\item[{\em (ii)}] $\sup_{k\geq 0} |A^k|_0/r(A)^k<\infty$ for some lattice norm  $|\cdot|_0$;
\item[{\em (iii)}] every eigenvalue $\lambda$ with $|\lambda|=r(A)$ is semisimple.
\end{enumerate}
\end{proposition}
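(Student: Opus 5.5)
I would prove the cycle (i)$\Rightarrow$(ii)$\Rightarrow$(i) first and then (ii)$\Leftrightarrow$(iii), writing $r=r(A)>0$ throughout.

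\emph{(i)$\Rightarrow$(ii).} Take $|\cdot|_0:=|\cdot|_*$. Subordinate norms are submultiplicative, so $|A^k|_*\le|A|_*^k=r^k$. Hence $\sup_k|A^k|_0/r^k\le1$.

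\emph{(ii)$\Rightarrow$(i).} I would repeat the construction \eqref{extremal-lattice} from the proof of Proposition~\ref{lattice-renorming}, now with $\rho=r$ exactly:
$$|z|_*:=\sup_{k\ge0}\frac{\bigl|A^k|z|\bigr|_0}{r^k}.$$
This supremum is finite because $\bigl|A^k|z|\bigr|_0\le|A^k|_0\,|z|_0$ and the quotients $|A^k|_0/r^k$ are bounded by (ii). The verification that $|\cdot|_*$ is a lattice norm carries over verbatim. The chain \eqref{seq-ineq-norm} then gives $|A|_*\le r$. Combined with $r(A)\le|A|_*$, this yields equality.

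\emph{(ii)$\Leftrightarrow$(iii).} All norms on $\R^{N\times N}$ are equivalent, so (ii) is equivalent to boundedness of $(A/r)^k$ in any norm. I would use the Jordan form $A=SJS^{-1}$ over $\mathbb C$. Then $(A/r)^k=S(J/r)^kS^{-1}$, and it suffices to examine each Jordan block $J_m(\lambda)/r$ separately. Its $k$-th power has entries $\binom{k}{j}\lambda^{k-j}/r^{k}$ for $0\le j<m$.
\begin{itemize}
\item If $|\lambda|<r$, these entries tend to $0$.
\item If $|\lambda|=r$ and $m=1$, the single entry has modulus $1$.
\item If $|\lambda|=r$ and $m\ge2$, the superdiagonal entry has modulus $k/r$, which is unbounded.
\end{itemize}
Hence the powers are bounded if and only if every block for a peripheral eigenvalue has size one. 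That is exactly semisimplicity of every $\lambda$ with $|\lambda|=r$.

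The only delicate point is the Jordan-block computation, in particular checking that unboundedness of a single block forces unboundedness of $(A/r)^k$. This holds because $\|S^{-1}(A/r)^kS\|\le\|S^{-1}\|\,\|(A/r)^k\|\,\|S\|$. The rest reuses Proposition~\ref{lattice-renorming}. Nonnegativity of $A$ matters only in (ii)$\Rightarrow$(i), where it is needed for the lattice property and for the inequality $|Az|\le A|z|$.
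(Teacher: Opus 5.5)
Your proof is correct and follows the paper's argument. You get (i)$\Rightarrow$(ii) from $|A^k|_*\le r(A)^k$, (ii)$\Rightarrow$(i) from the extremal lattice norm $\sup_{k\ge0}\bigl|A^k|z|\bigr|_0/r(A)^k$ with $\rho=r(A)$, and (ii)$\Leftrightarrow$(iii) from the Jordan form; your explicit Jordan-block computation and your use of $r(A)\le|A|_*$ to turn $|A|_*\le r(A)$ into equality fill in details the paper leaves implicit.
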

\begin{proof}
The implication (i) $\Rightarrow$ (ii)  follows from $|A^k|_*\le r(A)^k$.  The equivalence (ii) $\Leftrightarrow$ (iii) comes from the Jordan form.
In order to see (ii) $\Rightarrow$ (i) define
$$
|z|_*:=\sup_{k\ge0} \frac{\bigl|A^k|z|\bigr|_0}{r(A)^{k}}.
$$
Then it is clear that, by the assumption in (ii), $|\cdot|_*$ is well-defined. By the arguments from the previous proof, $|\cdot|_*$ is a lattice norm and, by the same arguments like in
\eqref{seq-ineq-norm} we get $|Az|_* \leq r(A)|z|_*$, for all $z\in\R^N$, i.e. $|A|_*\leq r(A)$.
\end{proof}

Therefore, whenever $r(A)<1$, due to Proposition \ref{lattice-renorming}, we can choose a norm $|\cdot|_*$  so that  $|A|_*<1$ and, for $d_*=|d|_*$,
$$
d_*\bigl(T(x),T(y)\bigr)\leq |A|_* \,d_*(x,y), \ \ \mbox{ for any }  \ \ x,y\in X.
$$
Thus Perov's theorem becomes a direct consequence of the Banach contraction principle.
\begin{theorem}[Perov \cite{Perov}]\label{Perov}
Let $(X,d)$ be a nonempty complete vector-valued metric space and let
$T:X\to X$ satisfy \eqref{lipf2} for some nonnegative matrix $A$. If
\begin{equation}\label{sr}
r(A)<1,
\end{equation}
then $T$ has a unique fixed point $x_*\in X$.
\end{theorem}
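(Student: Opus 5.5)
The plan is to reduce Theorem~\ref{Perov} to the Banach contraction principle by scalarizing the vector-valued metric, exactly as indicated in the discussion preceding the statement. Since $r(A)<1$, I choose $\eps:=\frac{1}{2}(1-r(A))>0$ and invoke Proposition~\ref{lattice-renorming}. This yields a lattice norm $|\cdot|_\eps$ on $\R^N$ with
$$
q:=|A|_\eps<r(A)+\eps<1 .
$$
I then set $d_\eps(x,y):=|d(x,y)|_\eps$ as in \eqref{metric*}.

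The first step is to check that $d_\eps$ is a genuine metric on $X$. Symmetry and homogeneity are immediate. Separation follows because $|\cdot|_\eps$ is a norm and $d(x,y)=0$ forces $x=y$. For the triangle inequality, the componentwise inequality $d(x,z)\leq d(x,y)+d(y,z)$ holds between nonnegative vectors, so the lattice (monotonicity) property gives
$$
|d(x,z)|_\eps\leq|d(x,y)+d(y,z)|_\eps\leq d_\eps(x,y)+d_\eps(y,z).
$$
Completeness of $(X,d_\eps)$ comes from the equivalence of $|\cdot|_\eps$ with $|\cdot|_\infty$ on $\R^N$. Indeed, a sequence is $d_\eps$-Cauchy, respectively convergent, iff it is so for each $d_i$, i.e.\ for $d$, which is how completeness of the vector-valued space is understood.

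The second step is the contraction estimate. Since $0\le d(Tx,Ty)\leq A\,d(x,y)$ componentwise and $|\cdot|_\eps$ is monotone on nonnegative vectors,
$$
d_\eps(Tx,Ty)\leq |A\,d(x,y)|_\eps\leq |A|_\eps\, d_\eps(x,y)=q\,d_\eps(x,y).
$$
The Banach contraction principle on the nonempty complete metric space $(X,d_\eps)$ then gives a unique fixed point $x_*$. Uniqueness is intrinsic to $T$ and does not depend on the chosen metric. As a by-product, the iterates $T^n x_0$ converge to $x_*$ in each $d_i$.

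There is no real obstacle, since the substantive work was done in Proposition~\ref{lattice-renorming}. The only point requiring care is the use of monotonicity of the lattice norm in both the triangle inequality and the contraction estimate. This is legitimate because all vectors involved are nonnegative, so $|u|=u\leq v=|v|$.
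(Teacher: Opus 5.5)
Your proposal is correct and follows the paper's own route. You scalarize $d$ with the lattice norm from Proposition~\ref{lattice-renorming}, chosen so that $|A|_\eps<1$, obtain completeness from the equivalence of norms on $\R^N$, and apply the Banach contraction principle. Your check that $d_\eps$ satisfies the triangle inequality, via monotonicity on nonnegative vectors, spells out a detail the paper leaves implicit.
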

There is an immediate counterpart for mappings acting on product spaces.
Let $(X,\|\cdot\|)$ be as in \eqref{product}, \eqref{vnorm}, and let $T=(T_1,\ldots,T_N):X\to X$.
\begin{corollary}\label{Perov1}
If $T:X\to X$ satisfies \eqref{Alip1} and $r(A)<1$, then $T$ has a unique fixed point.
\end{corollary}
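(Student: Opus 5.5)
The plan is to reduce Corollary~\ref{Perov1} to Theorem~\ref{Perov}. The product $X=\prod_{i=1}^N X_i$ carries the vector-valued metric $d$ from \eqref{vmetric}, and Theorem~\ref{Perov} then applies directly. Condition \eqref{Alip1} is precisely \eqref{lipf2} for this $d$, because $\|T(x)-T(y)\|=d(T(x),T(y))$ and $\|x-y\|=d(x,y)$ coordinatewise.

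First, I will check that $d$ is a vector-valued metric. Each $d_i(x,y)=\|x_i-y_i\|_i$ is a pseudometric on $X$: symmetry and the triangle inequality hold coordinatewise from the norm axioms in $X_i$. The family separates points, since $d_i(x,y)=0$ for all $i$ forces $x_i=y_i$ for every $i$, that is, $x=y$.

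Second, I will verify completeness. As noted after \eqref{metric*}, completeness can be tested with any scalarization, for instance $d_\infty=|d|_\infty$, which is the metric of the max-norm on $X$. A sequence that is Cauchy for $d_\infty$ is Cauchy in each coordinate $X_i$. Each $X_i$ is a Banach space, so every coordinate converges, and hence the sequence converges in $d_\infty$. Thus $(X,d)$ is complete; it is nonempty since it contains $0$.

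With these two facts and $r(A)<1$, Theorem~\ref{Perov} gives a unique fixed point. If one prefers to avoid citing Theorem~\ref{Perov} abstractly, the route via Proposition~\ref{lattice-renorming} works just as well. Choose $\eps=(1-r(A))/2$ and take the lattice norm $|\cdot|_\eps$ with $|A|_\eps<1$. By \eqref{lip}, $T$ is then a strict contraction of the Banach space $(X,\|\cdot\|_\eps)$, which is complete because this norm is equivalent to the max-norm. The Banach contraction principle finishes the proof, including uniqueness.

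There is no real obstacle here. The only point needing care is confirming that \eqref{norm*1} defines a genuine norm: the triangle inequality uses the lattice property of $|\cdot|_\eps$ together with the coordinatewise triangle inequalities. Everything else is routine.
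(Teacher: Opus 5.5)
Your proposal is correct and follows the paper's own route: the corollary is stated there as an immediate consequence of Theorem~\ref{Perov} applied to $X$ with the vector-valued metric \eqref{vmetric}. Your alternative via Proposition~\ref{lattice-renorming}, the estimate \eqref{lip} and the Banach contraction principle is exactly the reduction the paper uses to justify Theorem~\ref{Perov} itself, and your checks of the metric axioms, completeness and the norm property of \eqref{norm*1} fill in the routine details the paper leaves implicit.
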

The standard proof of Perov's theorem follows the successive-approximation argument of the Banach contraction principle. By Theorem~\ref{r(A)<1-app} in the Appendix, the condition $r(A)<1$ is equivalent to $A^n\to 0$ as $n\to\infty$. Thus the powers of $A$ control the propagation of perturbations under successive iterations.

As mentioned in Introduction the relation between Perov's theorem and the Banach contraction principle has already been studied by several authors. Jachymski and Klima~\cite{Jachymski} showed that Perov's theorem is subsumed by the Banach contraction principle in view of the remetrization approach of Goebel~\cite{Goebel}. Cvetkovi\'c~\cite[Theorem~3.6]{Cvet} also established this implication, using a sufficiently high iterate that is contractive in a scalar metric; see also Khamsi~\cite{Khamsi} for related arguments involving the so-called $b$-metric spaces.

\subsection{Perron--Frobenius adapted lattice norms}

The scalarization technique, relying on an appropriate choice of a lattice norm was introduced in numerical analysis several decades ago (see \cite{Szyld, Van}). In particular, weighted monotone norms and their relationship with the spectral properties of positive matrix operators in the setting of finite-dimensional ordered spaces form the core of the important paper of Rheinboldt and Vandergraft~\cite{Van}.

The purpose of introducing weighted lattice norms in the following theorem is to convert the matrix Lipschitz estimate into an ordinary scalar Lipschitz estimate while preserving the product structure. Although most of the results stated below are known and scattered throughout the literature, e.g. \cite{Van}, we give self-contained proofs of the relevant facts needed in what follows in order to prepare for the passage from $r(A)<1$ to the critical case $r(A)=1$.

\begin{theorem}\label{irr}
Let $A\in\R_+^{N\times N}$ and let $\rho>0$.\\[0.4em]
\noindent{\em (i)} The following conditions are equivalent:
\begin{enumerate}
\item[\rm(a)] $r(A)<\rho$;
\item[\rm(b)] there exists $u\in\R^N$, $u\gg0$, such that $Au\ll \rho u$;
\item[\rm(c)] there exists a weighted $\ell^\infty$-type lattice norm
\begin{equation}\label{winfnorm}
|z|_{\infty,u}:=\max_{1\leq i\leq N}\frac{|z_i|}{u_i},\;\; z\in\R^N,
\end{equation}
where $u\in\R^N$ and $u\gg 0$, such that $|A|_{\infty,u}<\rho$;
\item[\rm(d)] there exists $w\in\R^N$, $w\gg0$, such that $A^Tw\ll \rho w$;
\item[\rm(e)] there exists a weighted $\ell^1$-type lattice norm
\begin{equation}\label{wsumnorm}
|z|_{1,w}:=\sum_{i=1}^N w_i|z_i|,\;\; z\in\R^N,
\end{equation}
where $w\in\R^N$ and $w\gg 0$, such that $|A|_{1,w}<\rho$.
\end{enumerate}
\noindent{\em (ii)} If there exist $u,w\in\R^N$, $u,w\gg 0$ such that $Au\leq \rho u$ and $A^Tw\leq\rho w$, then there is a positive diagonal matrix $M\in\R^{N\times N}$ such that
$$|A|_M\leq \rho,$$
where $|A|_M$ is the matrix norm subordinate to the $\ell^2$-type norm $$|z|_M:=\sqrt{z^TMz},\;\; z\in \R^N.$$

\noindent{\em (iii)} If $A$ is irreducible, then there exist $u,w\gg0$ such that
$$
|A|_{\infty,u}=|A|_{1,w}=r(A).
$$
In particular there is a positive diagonal matrix $M\in\R^{N\times N}$ such that
$$|A|_M=r(A).$$

\end{theorem}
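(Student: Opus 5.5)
The plan is to base everything on two elementary identities for weighted norms. For $u\gg0$ and a nonnegative matrix $A$,
\[
|A|_{\infty,u}=\max_i \frac{(Au)_i}{u_i},
\]
because for $|z|_{\infty,u}\le1$ we have $|Az|\le A|z|\le Au$, with equality at $z=u$. Dually, for $w\gg0$,
\[
|A|_{1,w}=\max_j \frac{(A^Tw)_j}{w_j},
\]
because $\sum_i w_i|(Az)_i|\le \sum_j (A^Tw)_j|z_j|$, with equality at $z=e_j$. These give (b)$\Leftrightarrow$(c) and (d)$\Leftrightarrow$(e) immediately, since the maximum of finitely many ratios is $<\rho$ if and only if each ratio is. The implications (c)$\Rightarrow$(a) and (e)$\Rightarrow$(a) follow from $r(A)\le|A|_*$.

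The substantive step is (a)$\Rightarrow$(b). Since $r(A/\rho)<1$, the Neumann series $(\rho I-A)^{-1}=\sum_{k\ge0}A^k/\rho^{k+1}$ converges; this uses Theorem~\ref{r(A)<1-app} from the Appendix, or Proposition~\ref{lattice-renorming} with $\eps$ small. Its terms are nonnegative and it dominates $\rho^{-1}I$. I will put $u:=(\rho I-A)^{-1}\mathbf 1$, where $\mathbf 1=(1,\ldots,1)$. Then $u\ge\rho^{-1}\mathbf 1\gg0$ and $\rho u-Au=\mathbf 1\gg0$. Applying the same construction to $A^T$, which satisfies $r(A^T)=r(A)$, gives (a)$\Rightarrow$(d). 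This closes the cycle in (i).

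For (ii), I will take $M=\operatorname{diag}(w_i/u_i)$ and estimate $|Az|_M^2=\sum_i \frac{w_i}{u_i}\bigl(\sum_j a_{ij}z_j\bigr)^2$ by Cauchy--Schwarz with weights $u_j$:
\[
\Bigl(\sum_j a_{ij}|z_j|\Bigr)^2\le \Bigl(\sum_j a_{ij}u_j\Bigr)\Bigl(\sum_j a_{ij}\frac{z_j^2}{u_j}\Bigr)\le \rho u_i\sum_j a_{ij}\frac{z_j^2}{u_j}.
\]
Summing against $w_i/u_i$ then gives
\[
|Az|_M^2\le \rho\sum_j \frac{z_j^2}{u_j}(A^Tw)_j\le\rho^2\sum_j \frac{w_j}{u_j}z_j^2=\rho^2|z|_M^2.
\]
In the complex case one replaces $z_j^2$ by $|z_j|^2$ and $z^TMz$ by $z^*Mz$, and the argument is unchanged.

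For (iii), I will invoke the Perron--Frobenius theorem from the Appendix. For irreducible $A$ there are $u\gg0$ with $Au=r(A)u$ and $w\gg0$ with $A^Tw=r(A)w$. By the two identities, $|A|_{\infty,u}=|A|_{1,w}=r(A)$, and (ii) with $\rho=r(A)$ gives $|A|_M\le r(A)$. The reverse inequality $|A|_M\ge r(A)$ is automatic. If $r(A)=0$, which for an irreducible matrix forces $N=1$ and $A=0$, the claims are trivial.

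The main obstacle is the positivity of the Perron eigenvectors in (iii). Everything else is elementary, and that step relies entirely on the Appendix version of Perron--Frobenius for irreducible matrices.
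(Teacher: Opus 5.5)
Your proof is correct and follows essentially the same route as the paper: the Neumann-series vector $u=(\rho I-A)^{-1}\mathbf{1}$ for (a)$\Rightarrow$(b) (and its transpose version for (d)), the bound $r(A)\le|A|_*$ to close the cycle, the same weighted Cauchy--Schwarz estimate with $M=\operatorname{diag}(w_i/u_i)$ for (ii), and Perron eigenvectors for (iii). The only difference is a small streamlining: you use the exact formulas $|A|_{\infty,u}=\max_i (Au)_i/u_i$ and $|A|_{1,w}=\max_j (A^Tw)_j/w_j$ (the first is the paper's own identity \eqref{norm-equality} from Proposition~\ref{irr1}), which give (b)$\Leftrightarrow$(c), (d)$\Leftrightarrow$(e) and the equalities in (iii) directly, whereas the paper proves one-sided bounds and in (iii) argues via $\rho=r(A)+\eps$ followed by the reverse inequality.
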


\begin{proof}
(i) Assume first that $r(A)<\rho$ and put $B:=\rho^{-1}A$. Then $B\geq0$ and $r(B)<1$. Hence, in view of Theorem \ref{r(A)<1-app},
$$
(I-B)^{-1}=\sum_{k=0}^{\infty}B^k
$$
exists and is nonnegative. Let $ u:=(I-B)^{-1}e$ where $e:=(1,\ldots,1)^T$.
Then $u=Bu+e$ and  $u\geq e\gg0$. We also get
$$
Au=\rho (u-e)\ll \rho u,
$$
which proves the implication (a)\,$\Rightarrow$\,(b).

Now suppose that $u\gg0$ and $Au\ll \rho u$. Set
$$
q:=\max_{1\leq i\leq N}\frac{(Au)_i}{\rho u_i}.
$$
Then $0\leq q<1$ and $Au\leq q \rho\,u$. Observe that, for any $z\in\R^N$,
$$
|z|\leq |z|_{\infty,u}u,
$$
and therefore
$$
|Az|
\leq A|z|
\leq |z|_{\infty,u}Au
\leq q \rho \,|z|_{\infty,u}u.
$$
Hence $|A|_{\infty,u}\leq q \rho<\rho$, which proves (b)\,$\Rightarrow$\,(c). Finally, (c) implies (a), since due to  the general inequality \eqref{subo1}, we get $r(A)\leq |A|_{\infty,u}<\rho$.
Thus conditions (a), (b), and (c) are equivalent.

Applying the same argument to $A^T$ gives (a)\,$\Rightarrow$\,(d). If $w\gg0$ and $A^Tw\ll \rho w$, let
$$
q:=\max_{1\leq i\leq N}\frac{(A^Tw)_i}{\rho w_i}<1.
$$
Then, for any $z\in\R^N$,
\begin{align*}
|Az|_{1,w}
&=
\sum_{i=1}^N w_i|(Az)_i|
\leq
\sum_{i=1}^N w_i(A|z|)_i\\
&=
\sum_{i=1}^N(A^Tw)_i|z_i|
\leq
q \rho \sum_{i=1}^Nw_i|z_i|
=
q \rho \,|z|_{1,w}.
\end{align*}
Thus $|A|_{1,w}\leq q \rho < \rho$, proving (d)\,$\Rightarrow$\,(e), while (e)\,$\Rightarrow$\,(a) again follows from $r(A)\leq|A|_{1,w}$.\\

\indent (ii) Let
$$D_u=\mathrm{diag}(u_1,...,u_N),\;\;D_w=\mathrm{diag}(w_1,...,w_N)$$
and let
$$M=D_u^{-1}D_w=\mathrm{diag}(w_1/u_1,...,w_N/u_N).$$
Observe that $|\cdot|_M$ is indeed a lattice norm in $\R^N$ and
$$|z|^2_M=\sum_{i=1}^N\frac{w_i}{u_i}z_i^2,\;\; z\in\R^N.$$
Now let $z\in\R^N$. By the Cauchy–Schwarz inequality,
\begin{align*} (Az)_i^2 =\left(\sum_{j=1}^N a_{ij}z_j\right)^2\leq \left(\sum_{j=1}^N a_{ij}u_j\right) \left(\sum_{j=1}^N a_{ij}\frac{z_j^2}{u_j}\right)\leq \rho u_i\sum_{j=1}^N a_{ij}\frac{z_j^2}{u_j}. \end{align*}
Multiplying by $w_i/u_i$ and summing over $i$, gives
\begin{align*}|Az|_M^2&=(Az)^T M(Az)=\sum_{i=1}^N\frac{w_i}{u_i}(Az)_i^2\leq \rho\sum_{i,j=1}^N w_i a_{ij}\frac{z_j^2}{u_j}\\&=\rho\sum_{j=1}^N \frac{z_j^2}{u_j}\sum_{i=1}^N a_{ij}w_i\leq \rho^2\sum_{j=1}^N\frac{w_j}{u_j}z_j^2=\rho^2z^TMz=\rho^2|z|^2_M.
\end{align*}
Since this holds for every $z\in\R^N$, we get $|A|_M\leq \rho$.\\

\indent (iii) Suppose that $A$ is irreducible. In view of the Perron--Frobenius theorem -- see Theorem \ref{Per-Frob}, there exist right and left Perron vectors $u,w\gg0$ satisfying
$$
Au=r(A)u,
\qquad
A^Tw=r(A)w.
$$
Therefore, for any $\eps>0$,
$$Au\ll (r(A)+\eps)u,\;\; A^Tw\ll (r(A)+\eps)w.$$
By (i) this implies that $|A|_{\infty,u},|A|_{1,w}<r(A)+\eps$, i.e.
$$
|A|_{\infty,u}\leq r(A),
\qquad
|A|_{1,w}\leq r(A).
$$
In view of part (ii) above we have that $|A|_M\leq r(A)$, too.
Since the spectral radius is bounded above by every subordinate operator norm,
$$
r(A)\leq |A|_{\infty,u},
\;\;
r(A)\leq |A|_{1,w}\;\; \text{and}\;\; r(A)\leq |A|_M,
$$
and consequently $|A|_{\infty,u}=|A|_{1,w}=|A|_M=r(A)$.

\end{proof}

\begin{remark} (i) Clearly Theorem \ref{irr} (i) subsumes Proposition \ref{lattice-renorming}.The latter result was stated
and proved in order to show that scalarization in the context of Perov’s theorem can be carried out without appeal to the Perron–Frobenius theory.

(ii) Theorem \ref{irr} (i) shows that, for nonnegative matrices, apart from \eqref{inf-lattice},
\begin{equation}\label{inf-lattice-1}
\inf\bigl\{|A|_{\infty,u}\mid u\in\R^N,\,u\gg 0\bigr\}=r(A).
\end{equation}
This is a reformulation of the Collatz--Wielandt formula (e.g. \cite[\S8.1]{Horn}, \cite[Chap. 2]{Berman} and \cite[Thms.3,4]{Van}).  In general the infimum need not be attained. It is attained by a weighted norm if $A$ is irreducible, or -- more generally -- if and only if $Au\leq r(A)u$ for some $u\gg 0$, see also Proposition \ref{irr1}.

(iii) Theorem \ref{irr} (ii) and the second assertion in (iii) give a particularly attractive alternative to the weighted supremum or weighted $\ell^1$ norm: a weighted Euclidean norm $|\cdot|_M$ induced by a positive definite diagonal matrix $M$. This norm  is uniformly convex, since it is Hilbertian. If each space $X_i$ is uniformly convex, then so is $\|\cdot\|_M=\big|\|\cdot\|\big|_M$. Similarly if each space $X_i$ is a Hilbert space with scalar product $\la\cdot,\cdot\ra_i$, $i=1,\ldots,N$, then so is $X$ and the norm $\|\cdot\|_M$ is determined by the scalar product
$$\la x,y\ra_M:=\sum_{i=1}^N\frac{w_i}{u_i}\la x_i,y_i\ra_i.$$
\end{remark}

\begin{corollary}\label{perov-lattice-cor}
For $A\in\R_+^{N\times N}$, the following conditions are equivalent
\begin{enumerate}
\item[\rm(a)] $r(A)<1$;
\item[\rm(b)] there exists  $u\gg 0$ such that  $Au\ll u$;
\item[\rm(c)] there exists $w\gg 0$ such that  $A^Tw\ll w$;
\item[\rm(d)] there exists a lattice norm $|\cdot|_*$ such that $|A|_*<1$.
\end{enumerate}
\end{corollary}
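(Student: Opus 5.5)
The plan is to obtain the corollary from Theorem~\ref{irr}(i) with $\rho=1$, together with Proposition~\ref{lattice-renorming} and the general inequality $r(A)\leq|A|_*$ for subordinate norms.

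\textbf{(a)$\Leftrightarrow$(b) and (a)$\Leftrightarrow$(c).} Theorem~\ref{irr}(i) with $\rho=1$ gives (a)$\Leftrightarrow$(b) directly, since its condition (b) reads $Au\ll u$ for some $u\gg0$. Likewise its condition (d) gives (a)$\Leftrightarrow$(c).

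\textbf{(a)$\Rightarrow$(d).} Assume $r(A)<1$. One route is Theorem~\ref{irr}(i)(c): the weighted norm $|\cdot|_{\infty,u}$ is a lattice norm with $|A|_{\infty,u}<1$. Alternatively, apply Proposition~\ref{lattice-renorming} with $\eps:=1-r(A)>0$. This yields a lattice norm $|\cdot|_\eps$ with $|A|_\eps<r(A)+\eps=1$.

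\textbf{(d)$\Rightarrow$(a).} Let $|\cdot|_*$ be a lattice norm with $|A|_*<1$, and let $\lambda\in\sigma(A)$ with eigenvector $z$. If $\lambda$ is real, then $z$ may be taken real. Then $|\lambda|\,|z|_*=|Az|_*\leq|A|_*|z|_*$, so $|\lambda|\leq|A|_*$.

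For complex $\lambda$ this argument needs the norm on $\mathbb C^N$, which is the only point requiring care. The cleanest fix avoids complex eigenvectors entirely. The subordinate norm is submultiplicative, so $|A^k|_*\leq|A|_*^k\to0$, hence $A^k\to0$. Theorem~\ref{r(A)<1-app} then gives $r(A)<1$.

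The same conclusion also follows from the inequality $r(A)\leq|A|_*$ quoted in the proof of Theorem~\ref{irr} as \eqref{subo1}. Either way, $r(A)\leq|A|_*<1$.

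No step is a genuine obstacle. The only subtlety is the complex-spectrum issue in (d)$\Rightarrow$(a), and it is handled by citing $A^k\to0$ and the Appendix theorem.
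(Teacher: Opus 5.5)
Your proposal is correct and matches the paper, which states the corollary without a separate proof. It follows from Theorem~\ref{irr}(i) with $\rho=1$ for (a)$\Leftrightarrow$(b), (a)$\Leftrightarrow$(c) and (a)$\Rightarrow$(d), and from the inequality $r(A)\le|A|_*$ in \eqref{subo1} for (d)$\Rightarrow$(a); your worry about complex eigenvectors is harmless but unnecessary, since \eqref{subo1} already settles it, as does Perron--Frobenius, which makes $r(A)$ itself an eigenvalue with a nonnegative real eigenvector.
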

\begin{remark} Let $u\in\R^N$ and $u\gg 0$. The condition $Au\leq u$ has a clear geometric meaning. Namely it is equivalent to the invariance $A(Q)\subset Q$, where
$$Q:=\{z\in\R^N\mid |z|\leq u\}.$$
Observe that $Q$ is the unit ball for $|\cdot|_{\infty,u}$. Indeed if $z\in Q$, then $|z|\leq u$, hence $|Az|\leq A|z|\leq Au\leq u$, so $Az\in Q$. Conversely, $u\in Q$, and $A(Q)\subset Q$ implies $Au\in Q$; since $Au\geq 0$, this gives $Au\leq u$.
\end{remark}

We have the following result changing the matrix Lipschitz condition into an ordinary Lipschitz condition
with respect to a suitable weighted norm; it
will play a key role in the critical case $r(A)=1$.

\begin{proposition}\label{irr1}
Let $(X,\|\cdot\|)$ be as in \eqref{product}, \eqref{vnorm}, and let $T=(T_1,\ldots,T_N):K\to K$ with $K\subset X$ satisfy \eqref{Alip1} with $A\in\R_+^{N\times N}$.
If $u\gg 0$, then
$$
\|T(x)-T(y)\|_{\infty,u} \leq |A|_{\infty,u}\,\|x-y\|_{\infty,u},
$$
where  $\|x\|_{\infty,u}= \big| \|x\| \big|_{\infty,u}$ for $x\in X$,
and
$$|A|_{\infty,u}=\max_{1\leq i\leq N}\frac{(Au)_i}{u_i}.$$

\noindent Moreover, if $Au \leq r u$ for some $r>0$, then $|A|_{\infty,u}\leq r$.

\end{proposition}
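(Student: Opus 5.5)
The plan is to reduce everything to the scalar estimate already used in the proof of Theorem~\ref{irr}(i), applied pointwise to difference vectors. Fix $x,y\in K$ and put $z:=\|x-y\|\in\R_+^N$, the vector of coordinate norms from \eqref{vnorm}. By \eqref{Alip1} we have the componentwise inequality
\[
\|T(x)-T(y)\|\leq Az .
\]
Since $|\cdot|_{\infty,u}$ is a lattice norm, it follows that
\[
\|T(x)-T(y)\|_{\infty,u}=\bigl|\|T(x)-T(y)\|\bigr|_{\infty,u}\leq |Az|_{\infty,u}\leq |A|_{\infty,u}\,|z|_{\infty,u}.
\]
This is exactly $|A|_{\infty,u}\,\|x-y\|_{\infty,u}$, which gives the first claim. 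This is the same scalarization already carried out in \eqref{lip}, specialized to the weighted norm.

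Next I will verify the formula $|A|_{\infty,u}=\max_i (Au)_i/u_i$. Put $q:=\max_i (Au)_i/u_i$.

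For the upper bound I repeat the argument of (b)$\Rightarrow$(c) in Theorem~\ref{irr}. For any $z\in\R^N$ we have $|z|\leq |z|_{\infty,u}\,u$. Because $A\geq 0$, this gives
\[
|Az|\leq A|z|\leq |z|_{\infty,u}\,Au\leq q\,|z|_{\infty,u}\,u .
\]
Hence $|Az|_{\infty,u}\leq q|z|_{\infty,u}$, that is, $|A|_{\infty,u}\leq q$.

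For the lower bound I test on $z=u$. Then $|u|_{\infty,u}=1$, and since $Au\geq 0$,
\[
|Au|_{\infty,u}=\max_i \frac{(Au)_i}{u_i}=q .
\]
So the supremum in \eqref{subo} is attained at $u$, and $|A|_{\infty,u}=q$.

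The final assertion is then immediate. If $Au\leq ru$, then every ratio satisfies $(Au)_i/u_i\leq r$, so $|A|_{\infty,u}=q\leq r$.

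No step is a genuine obstacle. The only point needing care is the lower bound in the formula for the subordinate norm, and choosing the test vector $z=u$ resolves it, using that $Au\geq0$ so that no absolute values are lost.
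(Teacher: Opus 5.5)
Your proposal is correct and follows essentially the same route as the paper. The first inequality comes from the lattice-norm scalarization of \eqref{lip}; the upper bound on $|A|_{\infty,u}$ uses $|Az|\leq A|z|\leq |z|_{\infty,u}\,Au$; the reverse inequality tests on $z=u$, with $|u|_{\infty,u}=1$; and the final assertion is read off from the formula $|A|_{\infty,u}=\max_i (Au)_i/u_i$.
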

\begin{proof} The first inequality follows as in \eqref{lip}.
Note that $|u|_{\infty,u}=1$ and $|A|_{\infty,u} \geq |A u|_{\infty,u}=\mu$
where
$$
\mu =\max_{1\leq i\leq N}\frac{(Au)_i}{u_i}.
$$
On the other hand, for any $z\in \R^N$,  note that $|Az| \leq A|z| \leq  Au |z|_{\infty,u}$,
and, consequently,
$$
|Az|_{\infty,u}\leq |z|_{\infty,u} \mu,
$$
i.e. $|A|_{\infty,u}\leq \mu$. Hence
\begin{equation}\label{norm-equality}
|A|_{\infty,u} = \mu.
\end{equation}
\indent Now note that, for any $x,y\in K$,
$$
\|T(x)-T(y)\| \leq A \|x-y\|\leq A u \|x-y\|_{\infty,u},
$$
which additionally yields $\|T(x)-T(y)\|_{\infty,u}\leq \mu \|x-y\|_{\infty,u}$.

\vspace{2mm}

If $A u \leq r u$, then by applying \eqref{norm-equality} we get
$$
|A|_{\infty,u} = \max_{1\leq i\leq N} \frac{(Au)_i}{u_i}\leq r,
$$
which ends the proof.
\end{proof}

\begin{remark}
Weighted $\ell^1$- and $\ell^\infty$-type product norms are convenient for scalarization in products of Banach spaces. A product of at least two spaces is not strictly convex in either of these norms, even if all factors have uniformly convex norms. This prevents a direct appeal to the uniform convexity of the renormed product.  It does not prevent the use of normal structure: finite weighted $\ell^\infty$-products of spaces with normal structure retain normal structure by Belluce, Kirk and Steiner~\cite[Theorem~2.1]{Belluce}. This is the geometric fact used in the next section where the case $r(A)=1$ is considered. If $A$ is irreducible and all spaces $X_i$ are uniformly convex, then so is $X$ with the $\ell^2$-norm $\|\cdot\|_M$.
\end{remark}

\section{The critical case $r(A)=1$}\label{recursive}
Let $X=\prod_{i=1}^N X_i$, where the $X_i$ are Banach spaces, and let $K\subset X$ be nonempty and closed. We consider a mapping $T:K\to K$ satisfying the matrix Lipschitz estimate
$$\|T(x)-T(y)\|\leq A\|x-y\|, \ \ \mbox{ for all } \ \ x,y\in K,
$$
where $A\in\R_+^{N\times N}$. The preceding results naturally raise the question whether an analogous fixed point theory holds true in the critical case $r(A)=1$.
The strict-contraction renorming argument no longer applies when $r(A)=1$. Since the spectral
radius does not exceed any subordinate matrix norm,  no equivalent norm can make the associated matrix a strict contraction and we can not expect Banach's contraction principle to apply.

The purpose of the present Section is to show that there are two essentially different mechanisms leading to fixed point theorems in the critical case. The first one is based on an appropriately adapted weighted lattice norm. If there exists a vector $u\gg0$ such that
$Au\leq u$, then Proposition~\ref{irr1} provides a product norm under
which the mapping becomes nonexpansive. One may therefore apply classical fixed point theory for nonexpansive mappings.

The second mechanism is algebraic. If a positive subinvariant vector is not available, then the Frobenius normal form allows to decompose the problem into a sequence of lower-dimensional fixed point problems. Together, these
two approaches cover all nonnegative matrices whose spectral radius is equal to one.

\medskip

\begin{theorem}\label{critical-general-K}
Let $X_i$, $i=1,\ldots,N$, be Banach spaces having normal structure and put
$$
X=\prod_{i=1}^N X_i.
$$
Let $K\subset X$ be a nonempty, weakly compact and convex set and let
$T:K\to K$ satisfy
$$
\|T(x)-T(y)\|\leq A\|x-y\|, \ \ \mbox{ for all } \ \ x,y\in K,
$$
where $A\in\R^{N\times N}$ is a nonnegative matrix. Assume that there exists $u\in\R^N$
\begin{equation}\label{subinv-pos-vec}
u \gg 0  \ \ \mbox{ such that} \ \  Au\leq u.
\end{equation}
Then $T$ has a fixed point.
\end{theorem}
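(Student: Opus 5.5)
The plan is to scalarize the matrix estimate with the weighted supremum norm determined by $u$, and then to invoke the Browder--G\"ohde--Kirk principle, Theorem~\ref{BGK}(ii).

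\textbf{Step 1 (nonexpansiveness).} Equip $X$ with the norm $\|x\|_{\infty,u}=\max_i \|x_i\|_i/u_i$. Since $Au\leq u$, the last assertion of Proposition~\ref{irr1} with $r=1$ gives $|A|_{\infty,u}\leq 1$. The first assertion of the same proposition then yields
$$\|T(x)-T(y)\|_{\infty,u}\leq \|x-y\|_{\infty,u},\qquad x,y\in K.$$
Thus $T$ is nonexpansive on $K$ with respect to $\|\cdot\|_{\infty,u}$.

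\textbf{Step 2 (topological properties of $K$).} By the remarks in Section~\ref{basic-notation}, all product norms of the form \eqref{norm*1} are equivalent. They have the same dual and hence the same weak topology. Therefore $K$ remains nonempty, convex and weakly compact in $(X,\|\cdot\|_{\infty,u})$, and this is a Banach space.

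\textbf{Step 3 (normal structure).} Here we need that $(X,\|\cdot\|_{\infty,u})$, or at least $K$ in this norm, has normal structure. Consider the rescaled spaces $(X_i,\|\cdot\|_i/u_i)$. Multiplying a norm by a positive constant preserves normal structure, since diameters and radii scale by the same factor. Hence each rescaled space has normal structure. The space $(X,\|\cdot\|_{\infty,u})$ is exactly the $\ell^\infty$-product of these rescaled spaces. By the theorem of Belluce, Kirk and Steiner \cite[Theorem~2.1]{Belluce}, a finite $\ell^\infty$-product of Banach spaces with normal structure has normal structure. Consequently every bounded closed convex subset of $X$ has normal structure in this norm. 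In particular $K$ does: it is weakly compact, hence bounded and weakly closed, and being convex it is also norm closed.

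\textbf{Step 4 (conclusion).} Theorem~\ref{BGK}(ii) now applies to $F=T$, $C=K$ and $Y=(X,\|\cdot\|_{\infty,u})$, and produces a fixed point of $T$ in $K$.

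The only nontrivial ingredient is Step 3, namely the preservation of normal structure under finite $\ell^\infty$-products, which we take from \cite{Belluce}. The one point to verify carefully there is that the cited result covers arbitrary finite products and not only two factors. If it is stated only for two factors, induction on $N$ suffices, because $\ell^\infty$-products are associative up to isometry.
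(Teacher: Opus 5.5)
Your proof is correct and matches the paper's argument: you scalarize with $\|\cdot\|_{\infty,u}$ via Proposition~\ref{irr1}, observe that rescaling each $\|\cdot\|_i$ preserves normal structure, use Belluce--Kirk--Steiner to get normal structure of the finite $\ell^\infty$-product, and conclude with Theorem~\ref{BGK}(ii). Your extra remarks, on weak compactness being unchanged under equivalent product norms and on reducing to two factors by associativity, are harmless details the paper leaves implicit.
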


\begin{proof}
Equip $X$ with the weighted supremum norm
$$
\|x\|_{\infty,u} =  \left| \|x\|\right|_{\infty,u} = \max_{1\leq i\leq N}\frac{\|x_i\|_i}{u_i}.
$$
Since $Au\leq u$, in view of Proposition \ref{irr1}, we get
$|A|_{\infty,u}\leq 1$ and
$$
\|T(x)-T(y)\|_{\infty,u} \leq \|x-y\|_{\infty,u}, \ \ \mbox{ for all } \ \ x,y\in K.
$$
Thus $T$ is nonexpansive.

Rescaling the norm $\|\cdot\|_i$ of $X_i$ by $u_i^{-1}$ preserves normal structure, and the
finite $\ell^\infty$-product of spaces with normal structure has normal
structure; see Belluce, Kirk and Steiner \cite[Theorem~2.1]{Belluce}. Hence $K$ has normal structure with respect to
$\|\cdot\|_{\infty,u}$. The existence of a fixed point follows from
Theorem~\ref{BGK} (ii).
\end{proof}

\begin{remark} (i) The weighted supremum norm in this proof is not, in general, uniformly convex,
even if all $X_i$, $i=1,\ldots, N$ are so. Uniform convexity of the coordinate spaces implies reflexivity and normal structure; the Belluce--Kirk--Steiner theorem then gives normal structure for the weighted product $\ell^\infty$-norm. Note also that $\ell^1$-type norm need not preserve normal structure (see \cite{Landes}).

(ii) In view of Theorem \ref{irr} (ii), if $Au\leq u$ and $A^Tw\leq w$ for some $u,w\gg 0$, then $T$ is nonexpansive with respect to $\|\cdot\|_M$, where $M=\mathrm{diag}(w_1/u_1,...,w_N/u_N)$. The normal structure of $\|\cdot\|_M$ is guaranteed by Landes~\cite{Landes}.

\end{remark}
\begin{corollary}\label{nonexp-arbitrary-K} Assume that $X$, $K$ and $T$ satisfy all the assumptions of
Theorem~\ref{critical-general-K} except \eqref{subinv-pos-vec}. If $A$ is irreducible and $r(A)=1$, then $T$ has a fixed point.
\end{corollary}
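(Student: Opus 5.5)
The plan is to reduce Corollary~\ref{nonexp-arbitrary-K} directly to Theorem~\ref{critical-general-K}. To do so we only need to exhibit a vector $u\gg0$ with $Au\leq u$, so that condition \eqref{subinv-pos-vec} holds. Irreducibility of $A$ is exactly what supplies such a vector, through the Perron--Frobenius theorem (Theorem~\ref{Per-Frob} in the Appendix).

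The steps are as follows. First, since $A\geq0$ is irreducible, Theorem~\ref{Per-Frob} provides a right Perron eigenvector $u\gg0$ with $Au=r(A)u$. Because $r(A)=1$ by hypothesis, this gives $Au=u$, and in particular $Au\leq u$. The degenerate case $N=1$ causes no trouble: there $A=[a_{11}]$ with $r(A)=a_{11}=1$, and $u=(1)$ works trivially. Second, all remaining hypotheses of Theorem~\ref{critical-general-K} are assumed verbatim: the spaces $X_i$ have normal structure, $K$ is nonempty, weakly compact and convex, and $T$ satisfies the matrix Lipschitz estimate. Applying that theorem then yields a fixed point of $T$.

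Alternatively, one can cite Theorem~\ref{irr}(iii) to get $|A|_{\infty,u}=r(A)=1$. Proposition~\ref{irr1} then makes $T$ nonexpansive for $\|\cdot\|_{\infty,u}$, and one repeats the Belluce--Kirk--Steiner and Browder--G\"ohde--Kirk argument. This is the same argument, since it is the one already carried out inside the proof of Theorem~\ref{critical-general-K}.

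There is essentially no obstacle. The only point needing care is that the Perron vector must be \emph{strictly} positive, not merely nonnegative. This strict positivity is where irreducibility is used, and it is guaranteed by Theorem~\ref{Per-Frob}. Without it, the weighted norm $\|\cdot\|_{\infty,u}$ would not be defined.
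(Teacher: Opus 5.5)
Your proposal is correct and matches the paper's proof. The Perron--Frobenius theorem (Theorem~\ref{Per-Frob}) gives $u\gg0$ with $Au=r(A)u=u$, so \eqref{subinv-pos-vec} holds and Theorem~\ref{critical-general-K} applies directly.
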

\begin{proof}
In view of the Perron--Frobenius theorem -- see Theorem \ref{Per-Frob}, there exists
$u\gg0$ such that $Au=u$.
The conclusion follows immediately from
Theorem~\ref{critical-general-K}.
\end{proof}

For a reducible matrix, the sufficient condition $Au\leq u$ with $u\gg0$ may fail, even when $r(A)=1$. For example, take
$$
A=\begin{bmatrix}1&c\\0&1\end{bmatrix},
$$
with $c>0$. Here $Au\leq u$ is impossible for $u\gg0$. Nevertheless, this matrix is may be used by the following theorem. Thus the matrix estimate need not possess nonexpansive scalarizations.

The key observation is that reducibility itself provides additional
structure. Indeed, after a suitable simultaneous permutation of rows
and columns, every reducible matrix assumes \emph{Frobenius normal form} -- see Theorem \ref{fnf}.

\begin{theorem}\label{critical-main}
Let $X_i$, $i=1, \ldots, N$, be Banach spaces with normal structure and
$$
X=\prod_{i=1}^N X_i.
$$
Let $K\subset X$ be nonempty, weakly compact and convex. Assume that $T:K\to K$ satisfies the condition
$$\|T(x)-T(y)\|\le A\|x-y\|, \ \ \mbox{ for all } \ \ x,y\in K,
$$
where a nonnegative matrix $A\in\R^{N\times N}$ satisfies $r(A) \le 1 $. Then $T$ possesses a fixed point.
\end{theorem}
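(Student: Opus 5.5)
We argue by strong induction on the number $N$ of factors, splitting into the irreducible and reducible cases. For $N=1$, or more generally when $A$ is irreducible, the Perron--Frobenius theorem (Theorem~\ref{Per-Frob}) gives $u\gg0$ with $Au=r(A)u\le u$. For $1\times1$ matrices we take $u=1$, so that $Au=a_{11}\le 1=u$. In either case Theorem~\ref{critical-general-K} yields a fixed point; in the irreducible case with $r(A)=1$ this is exactly Corollary~\ref{nonexp-arbitrary-K}. So it remains to treat a reducible $A$ with $N\ge2$, assuming the theorem holds for all products of fewer than $N$ spaces with normal structure.

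\emph{Decoupling the reducible case.} Let $A$ be reducible, so some nonempty proper $J\subset\{1,\dots,N\}$ has $\operatorname{span}\{e_j:j\in J\}$ invariant under $A$. Put $L:=\{1,\dots,N\}\setminus J$. Invariance means $a_{ij}=0$ for $i\in L$ and $j\in J$. Hence for $i\in L$ the estimate reads $\|T_i(x)-T_i(y)\|_i\le\sum_{j\in L}a_{ij}\|x_j-y_j\|_j$. In particular $T_L:=(T_i)_{i\in L}$ depends only on $x_L$ on $K$. (Equivalently, one may pass to the Frobenius normal form of Theorem~\ref{fnf} and take $L$ to be the last diagonal block.) Write $A_{LL}$ and $A_{JJ}$ for the principal submatrices. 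After a simultaneous permutation, $A$ is block triangular with these diagonal blocks, so $\sigma(A)=\sigma(A_{LL})\cup\sigma(A_{JJ})$. Consequently $r(A_{LL}),r(A_{JJ})\le r(A)\le1$.

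\emph{Step 1: the projected problem.} Let $P_L:X\to X_L:=\prod_{i\in L}X_i$ be the coordinate projection and set $K_L:=P_L(K)$. Since $P_L$ is linear and continuous, hence weak-to-weak continuous, $K_L$ is nonempty, weakly compact and convex. Define $S:K_L\to K_L$ by $S(x_L):=T_L(x)$ for any $x\in K$ with $P_Lx=x_L$. This is well defined by the decoupling, maps $K_L$ into itself because $T(K)\subset K$, and satisfies $\|S(\xi)-S(\eta)\|\le A_{LL}\|\xi-\eta\|$. As $|L|<N$ and each $X_i$ has normal structure, the induction hypothesis gives $p\in K_L$ with $S(p)=p$.

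\emph{Step 2: the fibre problem.} Let $K_p:=\{x\in K: P_Lx=p\}$. It is nonempty because $p\in P_L(K)$. It is convex, and weakly compact as the intersection of $K$ with a weakly closed affine set. For $x\in K_p$ we have $T_L(x)=S(p)=p$, so $T(K_p)\subset K_p$. Identify $K_p$ with its image $K'\subset X_J:=\prod_{j\in J}X_j$ via $x\mapsto x_J$, an affine weak homeomorphism. For $x,y\in K_p$ the terms with $j\in L$ vanish, so the induced map $T'$ on $K'$ satisfies $\|T'(\xi)-T'(\eta)\|\le A_{JJ}\|\xi-\eta\|$. Since $|J|<N$ and $r(A_{JJ})\le1$, the induction hypothesis produces a fixed point of $T'$. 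The corresponding point $x\in K_p$ is a fixed point of $T$.

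The proof does not use any product structure of $K$; it only needs that projections and fibres of $K$ inherit nonemptiness, weak compactness and convexity. I expect the main technical care to lie in three points. First, the well-definedness of $S$ on the projection, which relies on the zero pattern of $A$. Second, the verification that passing to the fibre really removes the $L$-coordinates from the estimate. Third, the spectral inequality for the diagonal blocks, which follows from the block triangular form. These are exactly the places where the reducibility structure is used, and each seems straightforward once written out.
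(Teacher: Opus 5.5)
Your proof is correct and follows the same mechanism as the paper's. You first obtain a fixed point of the map induced on the projection onto the coordinate block whose components of $T$ do not depend on the remaining variables (the paper's blocks $2,\dots,m$, your $L$), and then solve on the invariant fibre over that point. The differences are only organizational. You induct on $N$ via a single reducibility splitting and apply the induction hypothesis to both the projected and the fibre problem, rather than peeling off one irreducible Frobenius block. This spares you the full Frobenius normal form. Because every irreducible block (including $[0]$) admits $u\gg0$ with $Au=r(A)u\le u$, it also spares you the appeal to Perov's theorem and the separate zero-block case.
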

\begin{proof} Since $K$ is weakly compact, it is norm-closed and hence complete. If $r(A)<1$, then the conclusion follows from Perov's theorem.
Hence we may assume that $r(A)=1$. If $A$ is irreducible, Corollary~\ref{nonexp-arbitrary-K} concludes the proof. We therefore continue with the reducible case.
\medskip

\noindent
{\bf Step 1. Reduction to Frobenius normal form:}
After a simultaneous permutation of rows and columns we may assume that $A$ is in Frobenius normal form
$$A=\begin{bmatrix}
A_{11}&A_{12}&\cdots&A_{1m}\\
0&A_{22}&\cdots&A_{2m}\\
\vdots&\ddots&\ddots&\vdots\\
0&\cdots&0&A_{mm}
\end{bmatrix},
$$
where each diagonal block is either irreducible or the zero
$1\times1$ matrix, and $r(A)=\max_{i=1,...,m}r(A_{ii})$ -- see Theorem \ref{fnf}.

Since the permutation induces a linear homeomorphism of the
corresponding product spaces and conjugates $T$ to a mapping
satisfying the matrix Lipschitz estimate with the permuted matrix $P^{-1}AP$, see \eqref{perm}. Therefore without loss of generality we may assume that $A$ already has the above block form.

\medskip

\noindent {\bf Step 2. Construction of the reduced problem:} Let $n_j$ denote the size of the block $A_{jj}$ and define
$$
s_0:=0,\qquad s_j:=n_1+\cdots+n_j,\qquad j=1,\ldots,m.
$$
If we put
$$
Y_j:=\prod_{i=s_{j-1}+1}^{s_j}X_i,\qquad j=1,\ldots,m,
$$
then we clearly have
$$
X=Y_1\times\cdots\times Y_m.
$$
For $j=1,\ldots,m$ let $\pi_j\colon X\to Y_j$ be the canonical projection onto the
$j$-th block, and let $S_j:=\pi_j\circ T\colon K\to Y_j$, i.e.
$$
S_j(x)=\bigl(T_{s_{j-1}+1}(x),\ldots,T_{s_j}(x)\bigr),\quad x\in K.
$$
Then $T=(S_1,\ldots,S_m)$.

\medskip

We claim that for each diagonal block, the corresponding component $S_j$ depends only on the variables belonging to the blocks numbered by $j,j+1,\ldots,m$. Indeed, if we fix $j\in\{1,\ldots,m\}$ and suppose that
$x,y\in K$ coincide in all blocks $j,j+1,\ldots,m$, then the corresponding coordinates of $\|x-y\|$ vanish.  Since $A$ is block upper triangular,
the same is true for the $j$-th block of $A\|x-y\|$. Consequently,
$\|S_j(x)-S_j(y)\|=0$, and therefore $S_j(x)=S_j(y)$.

\medskip

Now let $Y=Y_2\times\cdots\times Y_m$ and $q:=(\pi_2,\ldots,\pi_m)\colon X\to Y$.
By the claim above, the formula
$$
S(q(x)):=q(T(x))=\bigl(S_2(x),\ldots,S_m(x)\bigr),\qquad x\in K,
$$
defines a mapping $S\colon q(K)\to q(K)$.

\medskip

\noindent{\bf Step 3. Solving the reduced problem:} We now proceed by induction on the number $m$ of diagonal
Frobenius blocks.

\medskip

If $m=1$, then $A$ is irreducible or is the $1\times1$ zero matrix. If $r(A)<1$, including the zero-matrix case, Perov's theorem applies; if $r(A)=1$, Corollary~\ref{nonexp-arbitrary-K} applies. We use induction for the full assertion with $r(A)\leq1$, since the reduced problem may have spectral radius strictly smaller than one.

Assume now that the assertion has been proved whenever the Frobenius
normal form consists of at most $m-1$ diagonal blocks.

Let
$$
A'=\begin{bmatrix}
A_{22}&\cdots&A_{2m}\\
\vdots&\ddots&\vdots\\
0&\cdots&A_{mm}
\end{bmatrix}.
$$
Since
$$
r(A')=\max_{2\le j\le m}r(A_{jj})\le1,
$$
the induction hypothesis applies to $S:q(K)\to q(K)$. Indeed, $q$ is weakly continuous and linear, so $q(K)$ is nonempty, weakly compact and convex. For any $y,y'\in q(K)$, choose lifts $x,x'\in K$. The zero blocks in the first block column below $A_{11}$ show that
$$
\|S(y)-S(y')\|\leq A'\|y-y'\|,
$$
independently of the first coordinates of the chosen lifts. The remaining coordinate spaces still have normal structure. Hence there exists $y_2^*\in q(K)$ such that $S(y_2^*)=y_2^*$.

\medskip

\noindent {\bf Step 4. The invariant fibre:} Having determined a fixed point of the reduced problem, we now recover
the first block. Define
$$
L:=\{z\in Y_1\mid (z,y_2^*)\in K\}.
$$
Since $y_2^* \in q(K)$, the fibre $L$ is nonempty.

Moreover,
$$
L\times\{y_2^*\}=K\cap(Y_1\times\{y_2^*\}).
$$
The affine subspace $Y_1\times\{y_2^*\}$ is   closed, and its identification with $Y_1$ is a weakly continuous linear homeomorphism. Consequently, $L$ is weakly compact and convex, and in particular is norm-closed and complete.

Define $F:L\to Y_1$ by
$$
F(z):=S_1(z,y_2^*),  \ \ \mbox{ for all  } z\in L.
$$
Since $S(y_2^*)=y_2^*$ and, for each $z\in L$, we have
$$
(F(z),y_2^*)= (S_1(z,y_2^*), y_2^*) =(S_1(z,y_2^*), S(y_2^*))=T(z, y_2^*) \in K,
$$
we get $F(z)\in L$. Therefore $F(L)\subset L$.

Moreover,
\begin{equation}\label{olip}
\|F(z)-F(z')\|\leq A_{11}\|z-z'\|, \qquad z,z'\in L.
\end{equation}
The block $A_{11}$ is either irreducible or the zero $1\times1$ matrix.
If $A_{11}=[0]$, then \eqref{olip} implies that $F$ is constant; since $F(L)\subset L$, its constant value is a fixed point of $F$. Otherwise, if $r(A_{11})<1$, Perov's theorem applies. If $r(A_{11})=1$, then Corollary~\ref{nonexp-arbitrary-K} applies to $F$ on $L$. Consequently, there exists $y_1^*\in L$ such that $F(y_1^*)=y_1^*$. Hence
$$
T(y_1^*,y_2^*)= (S_1(y_1^*,y_2^*), S (y_2^*)) = (F(y_1^*), S(y_2^*)) = (y_1^*, y_2^*),
$$
which completes the induction and the whole proof.
\end{proof}

\begin{corollary}\label{uniformly-convex-critical}
Assume, in addition to the matrix Lipschitz condition of Theorem~\ref{critical-main}, that every space$X_i$, $i=1,...,N$, is uniformly convex and $K$ is bounded, closed and convex. If $r(A)\leq 1$, then $T$ has a fixed point.
\end{corollary}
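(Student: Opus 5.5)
The plan is to reduce Corollary~\ref{uniformly-convex-critical} directly to Theorem~\ref{critical-main}. Two hypotheses of that theorem have to be checked: each $X_i$ has normal structure, and $K$ is weakly compact. Convexity of $K$ and the matrix Lipschitz estimate with $r(A)\leq 1$ are assumed verbatim, so nothing needs to be done for them.

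\textbf{Normal structure.} Every uniformly convex Banach space has normal structure; this is quoted in the Introduction via \cite[Chap.~6]{Kirk-Goebel}. Hence each $X_i$ satisfies the geometric hypothesis of Theorem~\ref{critical-main}.

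\textbf{Weak compactness of $K$.} This is the step that needs an argument, though it is short.
\begin{enumerate}
\item By the Milman--Pettis theorem, each uniformly convex $X_i$ is reflexive.
\item A finite product of reflexive spaces is reflexive. Its dual is the finite product of the duals, under any of the equivalent product norms \eqref{norm*1}, so the bidual is canonically the product of the biduals. Reflexivity does not depend on the choice among these equivalent norms.
\item In a reflexive space, closed bounded convex sets are weakly compact. Indeed, a norm-closed convex set is weakly closed by Mazur's theorem, and a bounded weakly closed set lies in a multiple of the unit ball, which is weakly compact by Kakutani's theorem.
\item Hence $K$ is nonempty, weakly compact and convex.
\end{enumerate}

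With both hypotheses verified, Theorem~\ref{critical-main} applies to $T:K\to K$ and yields a fixed point.

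No serious obstacle is expected. The only point to state carefully is that weak compactness and boundedness are independent of which equivalent product norm is used; this was already observed in Section~\ref{basic-notation}.

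We deliberately do not try to make the product norm uniformly convex and invoke Theorem~\ref{BGK}(i) directly. For reducible $A$ there may be no suitable weighted Euclidean norm making $T$ nonexpansive, as the example with $A=\begin{pmatrix}1&c\\0&1\end{pmatrix}$ shows. Routing the proof through Theorem~\ref{critical-main} avoids this issue.
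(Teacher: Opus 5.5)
Your proposal is correct and matches the paper's proof: uniform convexity gives each $X_i$ normal structure and reflexivity, the finite product is then reflexive, and so the bounded closed convex set $K$ is weakly compact. Theorem~\ref{critical-main} then applies directly.
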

\begin{proof} Every uniformly convex Banach space is reflexive and has normal structure. Hence each $X_i$ is reflexive and has normal structure, and therefore their finite product is reflexive. Since $K$ is  bounded, closed and convex, it is weakly compact. Theorem~\ref{critical-main} now applies.
\end{proof}

The spectral threshold $r(A)\leq1$ cannot be enlarged in general. Indeed, for $r(A)>1$ fixed points may fail to exist even in Hilbert spaces, as in the following example that is  appropriately adapted from the classical construction of Kakutani~\cite{Kakutani}.
\begin{example}
\label{beals-product}
Let $ X =X_1\times X_2$, where $X_1:=\R^n$ with $\|\cdot\|_1:=|\cdot|_2$ and $X_2:=\ell^2$ with $\|\cdot\|_2:=\|\cdot\|_{\ell^2}$. Set
$$
K:= \left\{
x=(x_1,x_2)\in X:
\|x_1\|_1^2+\|x_2\|_2^2\leq1
\right\}.
$$
Thus $K$ is the closed unit ball of the Hilbert space
$X_1\oplus_2X_2$. In particular, $K$ is nonempty, weakly compact
and convex, while both $X_1$ and $X_2$ are uniformly convex.

Fix $0<\eps\leq1$, and let $e_1=(1,0,\ldots,0)\in\R^n$.
We define $T:K\to X$, $T=(T_1,T_2)$, by
$$
T_1(x)
=
\eps\left(1-\bigl|\|x\|\bigr|_2\right)e_1,\; x\in K,$$
where $\|x\|=(\|x_1\|_1,\|x_2\|_2)\in\R^2$ and $|\cdot|_2$ stands for the Euclidean norm in $\R^2$, and
$$T_2(x)=J(x),$$
where $J:X\to\ell^2$ denotes the natural linear isometry given by
$$
J(x)=J(x_1,x_2)
=
(x_{1,1},\ldots,x_{1,n},x_{2,1},x_{2,2},\ldots).
$$
Let $x\in K$ and put $ t:=\bigl|\|x\|\bigr|_2\leq 1$. Since $J$ is an isometric embedding,
$$\Bigl|\bigl\|T(x)\bigr\|\Bigr|_2^2=\|T_1(x)\|_1^2+\|T_2(x)\|_2^2
=\eps^2(1-t)^2+t^2\leq(1-t)^2+t^2\leq1.
$$
Hence $T(K)\subset K$.

By the reverse triangle inequality
\begin{align*}
\|T_1(x)-T_1(y)\|_1
& =
\eps \left|
\bigl| \|x\| \bigr|_2-\bigl|\|y\|\bigr|_2
\right|
\leq
\eps
\left(
\|x_1-y_1\|_1^2+\|x_2-y_2\|_2^2
\right)^{1/2}\\
&\leq
\eps\|x_1-y_1\|_1+\eps\|x_2-y_2\|_2.
\end{align*}
Moreover,
$$
\begin{aligned}
\|T_2(x)-T_2(y)\|_2
&=
\|J(x_1-y_1,x_2-y_2)\|_2
=
\left(
\|x_1-y_1\|_1^2+\|x_2-y_2\|_2^2
\right)^{1/2}\\
&\leq
\|x_1-y_1\|_1+\|x_2-y_2\|_2.
\end{aligned}
$$
Consequently,
$$
\|T(x)-T(y)\|
\leq
A\|x-y\|,
\qquad x,y\in K,
$$
where
$$
A=
\begin{bmatrix}
\eps&\eps\\
1&1
\end{bmatrix}.
$$
The eigenvalues of $A$ are $0$ and $1+\eps$, and therefore $r(A)=1+\eps>1$.

Suppose that $T(x)=x$ for some $x=(x_1,x_2)\in K$. The second
component of the fixed point equation gives $x_2=J(x_1,x_2)$, which implies that $$
x_{2,n+k}=x_{2,k}, \ \ \mbox{ for all }\ \  k\ge 1,
$$
and therefore $x_2$ is periodic with period $n$. A nonzero periodic sequence cannot belong to $\ell^2$.
Hence $x_2=0$, and therefore also $x_1=0$. Thus $0=T_1(0)=\eps e_1$. This is a contradiction showing that $T$ has no fixed points.
\end{example}

\section{Applications}\label{applications}

The results presented above appear particularly well suited to coupled systems of equations in which the strengths of the interactions between the individual subsystems are not uniformly distributed.
\subsection{A Dirichlet boundary value problem}
We consider systems of the second-order Dirichlet boundary value problems
\begin{equation}\label{system-of-bvps}
\begin{cases}
-\ddot u_i (t)=f_i (t,u_1(t), \ldots, u_N(t)), & t\in[a,b], \ i= 1,\dots, N,\\
\ \ u_i(a)=u_i(b)=0, & i= 1,\dots, N\end{cases}
\end{equation}
with $f_i:[a,b]\times E_1\times  \ldots \times E_N\to E_i$ where
$(E_i,|\cdot|_i)$, $i=1,...,N$, are Banach spaces. We assume that $f_i$, $i=1,\ldots, N$  are Carath\'eodory maps: $f_i(\cdot,x)$ is measurable for all $x\in E_1\times \ldots \times E_N$ and $f_i(t,\cdot)$ is continuous for almost all $t\in [a,b]$. We look for its weak solutions $u_i:[a,b]\to E_i$ such that $u_i\in H^2((a,b),E_i)\cap H_0^1((a,b),E_i)$ for $i=1,\ldots, N$.

Problems of this type have been investigated by
numerous authors in the case $N=1$,
frequently by means of Sadovskii's or Darbo's fixed point
theorem; see, for example, \cite{Chandra}.
In particular, Zhou and Peng~\cite{Zhou} (see also the
extensive bibliography therein) consider the
Dirichlet boundary value problem
$-\ddot u(t)=f(t,u(t))$, $u(0)=u(1)=0$,
where $f:[0,1]\times E\to E$ is a continuous
mapping and $E$ is a Banach space.
Authors assume the linear
growth condition $\|f(t,x)\|_E\leq c_0+c_1\|x\|_E$, where
$c_1<\pi^2$, together with an estimate involving the Kuratowski
measure of noncompactness
$$\alpha(f([0,1]\times D))\leq L\alpha(D), \mbox{ for any bounded } D\subset E,
$$
with $L<4$. Alternatively, results in \cite{Zhou} are obtained under
Lipschitz-type assumptions of the form
$$\|f(t,x)-f(t,y)\|_E\leq L\|x-y\|_E,\ \  t\in [0,1],\; x,y\in E
$$
for the Lipschitz constant $L<\pi^2$.

\medskip

\indent Here we suppose that $f_i$, $i=1,\ldots, N$, satisfy a matrix Lipschitz condition, for each $i=1,...,N$:
\begin{equation}\label{ml}|f_i(t,x_1,\dots, x_N)-f_i(t,y_1,\dots, y_N)|_i\leq \sum_{j=1}^NL_{ij}|x_j-y_j|_j,
\end{equation}
for any $t\in [a,b]$, $x_j,y_j\in E_j$,  $j=1,\ldots, N$,  where $L_{ij}\geq 0$, and $|f_i(\cdot,0)|_i\in L^2[a,b]$, $i=1,...,N$.
Observe that \eqref{ml} implies that for any $1\leq i\leq N$, $f_i$ satisfies a growth condition:
\begin{equation}\label{gc}|f_i(t,x)|_i\leq |f_i(t,0)|_i+\sum_{j=1}^NL_{ij}|x_j|_j.\end{equation}
\indent If we consider $f=(f_1, \ldots, f_N):[a,b]\times E\to E$ where
$E:=E_1 \times E_2 \times \cdots \times E_N$, then \eqref{system-of-bvps} can be rewritten as
\begin{equation}\label{bvp}
\begin{cases}
-\ddot u(t)=f(t,u(t)), & t\in[a,b],\\
\ \ u(a)=u(b)=0,\end{cases}
\end{equation}
and look for its strong solutions \(u:[a,b]\to E\) in the space $u\in H^2((a,b),E)\cap H_0^1((a,b),E)$.\\
 \indent A standard fixed point formulation of \eqref{bvp} is obtained by means of the
Green function $g$ associated with the differential operator $-d^2/dt^2$ and the
Dirichlet boundary conditions. Namely \eqref{bvp} is equivalent to
the Hammerstein-type fixed-point problem
$$
u=GN_f(u),\;\; u\in X:=L^2([a,b],E)=\prod_{i=1}^NL^2([a,b],E_i),
$$
where $N_f:X\to X$ is the superposition operator determined by $f$, i.e.
$$N_f(u):=f(\cdot,u(\cdot)),\; u\in X,$$
and $G:X\to X$ is a linear Hammerstein integral operator given by
$$G(u)=\int_a^b g(\cdot,s)u(s)\,ds=\left(\int_a^bg(\cdot,s)u_1(s)\,ds,\dots,
\int_a^b g(\cdot,s)u_N(s)\,ds\right),\; u\in X,$$
where
\begin{equation}\label{green-function}
g(t,s)=\frac{1}{b-a}
\begin{cases}
(t-a)(b-s), & a\le t\le s\le b,\\[1mm]
(s-a)(b-t), & a\le s\le t\le b.
\end{cases}
\end{equation}
Observe that, in view of \eqref{ml}, $N_f$ is well-defined, $g(t,s)\geq 0$ on $[a,b]^2$ and $G$ maps $X$ into $\prod_{i=1}^NH^2((a,b),E_i)\cap H_0^1((a,b),E_i)$.\\[0.3em]
\indent The Dirichlet Green operator
$(-d^2/dt^2)^{-1}$ acting on $L^2(a,b)$ is compact,
positive, and self-adjoint. Hence, by the spectral theorem,
its norm equals $\lambda_1^{-1}$, where
$$\lambda_1=\frac{\pi^2}{(b-a)^2}$$ is the first Dirichlet eigenvalue of $-d^2/dt^2$;
see \cite[\S 5.4]{Teschl}. The same norm equality holds for the Green operator
acting on each Bochner space $X_i:=L^2([a,b],E_i)$.
Indeed, if we denote it still by $G$ and since the Green kernel $g(t,s)$ is nonnegative,
we have
$$|(Gu)(t)|_i\leq \int_a^b g(t,s)|u(s)|_i\,ds,$$
which implies
$$\mu_i:=\|G\|_{\mathcal L(X_i)}\leq\lambda_1^{-1}=\frac{(b-a)^2}{\pi^2}.$$
The reverse inequality follows by considering functions
of the form $u(t)=\varphi_1(t)x$, where $\varphi_1$
is a first Dirichlet eigenfunction and $x\in E_i$
is an arbitrary unit vector.

\indent Let $T:X\to X$ be given by $T=G\circ N_f$, i.e.
\begin{equation}\label{hammer}
(Tu)(t) = GN_f(u) (t)=
\int_a^b g(t,s)f(s,u(s))\,ds,\; u\in X.
\end{equation}
It is then clear that
$$T=(T_1,\ldots,T_N)\;\; \text{with}\;\; T_i(u)=\int_a^b g(t,s)f_i\big(s,u(s)\big)\,ds,\; u\in X,\, i=1,\ldots,N,$$
and that (strong) solutions of \eqref{bvp} are precisely the fixed points of $T$. For $i=1,...,N$, we endow the space $X_i$ with the usual norm $\|u\|_i:=\sqrt{\int_a^b|u(t)|_i^2\,dt}$, for $u\in X_i$.
Then
$$
\|T_i(u)-T_i(v)\|_i\leq \sum_{j=1}^N a_{ij}\|u_j-v_j\|_j,  \ \ \mbox{ for any } \ \ u,v \in X,
$$
where
$$a_{ij}=\mu_iL_{ij}=L_{ij}(b-a)^2/\pi^2.$$

In view of Perov's theorem we have
\begin{proposition}
If the spectral radius $r(A)< 1$, i.e. when
\begin{equation}\label{warunek1} r(L)<\frac{\pi^2}{(b-a)^2},\end{equation}
for $L=[L_{ij}]$, then system \eqref{system-of-bvps} has a unique solution.
\end{proposition}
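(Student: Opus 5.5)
The plan is to apply Corollary~\ref{Perov1} to the operator $T=G\circ N_f$ on $X=\prod_{i=1}^N L^2([a,b],E_i)$, with the matrix $A=[a_{ij}]$, $a_{ij}=\mu_iL_{ij}$, and to show that \eqref{warunek1} is exactly the condition $r(A)<1$. We already know that fixed points of $T$ are precisely the strong solutions of \eqref{bvp}, and hence of \eqref{system-of-bvps}. So existence and uniqueness of the solution reduce to existence and uniqueness of the fixed point.

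First I will check that $T$ is well defined as a self-map of $X$ and satisfies \eqref{Alip1}. By the growth condition \eqref{gc}, together with $|f_i(\cdot,0)|_i\in L^2$ and the Carath\'eodory assumptions, $N_f$ maps $X$ into $X$. Strong measurability in the Bochner sense follows from continuity in $x$ and measurability in $t$, approximating $u$ by simple functions. Then I fix $u,v\in X$ and use the pointwise estimate \eqref{ml} together with Minkowski's inequality in $L^2$:
\[
\|N_{f_i}(u)-N_{f_i}(v)\|_i\leq \sum_{j=1}^N L_{ij}\|u_j-v_j\|_j .
\]
Composing with $G$ on $X_i$, whose norm is $\mu_i=(b-a)^2/\pi^2$, gives $\|T_i(u)-T_i(v)\|_i\le\sum_j a_{ij}\|u_j-v_j\|_j$, as already noted before the statement.

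Next comes the spectral identification. Since $\mu_i=\mu:=(b-a)^2/\pi^2$ for every $i$, we have $A=\mu L$, and therefore $r(A)=\mu\, r(L)$. Thus $r(A)<1$ holds if and only if $r(L)<\pi^2/(b-a)^2$, which is \eqref{warunek1}. Each $X_i$ is a Banach space, so $X$ with the vector-valued norm \eqref{vnorm} is complete. Corollary~\ref{Perov1} then yields a unique fixed point $u^*\in X$. By the Green-function representation, $u^*=Gf(\cdot,u^*(\cdot))$ lies in $H^2\cap H^1_0$ and solves \eqref{system-of-bvps}.

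For uniqueness, any solution in $H^2\cap H_0^1$ belongs to $X$ and is a fixed point of $T$, so it must coincide with $u^*$.

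The only step needing real care is the measurability and well-definedness of the superposition operator $N_f$ in Bochner spaces with general Banach $E_i$. The Lipschitz and spectral steps are routine.
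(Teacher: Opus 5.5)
Your proposal is correct and is essentially the paper's own argument. The paper likewise derives $\|T_i(u)-T_i(v)\|_i\le\sum_j \mu_i L_{ij}\|u_j-v_j\|_j$ for $T=G\circ N_f$ from \eqref{ml} and $\|G\|=(b-a)^2/\pi^2$, notes that $r(A)=\frac{(b-a)^2}{\pi^2}\,r(L)$, and invokes Perov's theorem (Corollary~\ref{Perov1}) together with the identification of solutions of \eqref{system-of-bvps} with fixed points of $T$; your remarks on the measurability of $N_f$ and on uniqueness within $H^2\cap H^1_0$ only make explicit details the paper leaves implicit.
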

\indent If condition \eqref{warunek1} is violated, then the Perov fixed-point argument no longer applies, and solutions may fail to exist. In general if $r(L)\geq \pi^2/(b-a)^2$, then neither existence nor uniqueness is guaranteed by the Lipschitz condition alone. For instance in the scalar case when $E=\R$ and $N=1$, and $L=L_{11}=\frac{\pi^2}{(b-a)^2}$, then, in view of the Fredholm alternative, the problem
$$-\ddot u=Lu+h(t),\; u(a)=u(b)=0$$
with $h\in L^2[a,b]$, i.e., when $f(t,x)=h(t)+Lx$, $t\in [a,b]$, $x\in\R$, has no solution if $h$ is not orthogonal to the first eigenfunction $\varphi_1(t)=\sin((b-a)^{-1}\pi(t-a))$ of $-\ddot u$ with Dirichlet boundary conditions.

\medskip

At this place it is worth recalling that normal structure is
preserved by Bochner $L^p$-spaces for
$1<p<\infty$. More precisely, by a theorem of
Smith and Turett~\cite{Smith}, the space
$L^p([a,b],Y)$, where $Y$ is a Banach space, has normal structure if and only
if $Y$ has normal structure. Furthermore, $L^p([a,b],Y)$ is reflexive (resp. uniformly convex) if and only if $Y$ is reflexive (resp. uniformly convex).

Consequently, if $E_i$ is reflexive
and has normal structure (or is uniformly convex), then
$X_i=L^2([a,b],E_i)$ is reflexive with
normal structure.
\begin{theorem} Suppose that each $E_i$ is reflexive with normal structure. If, for each $i=1,...,N$,
$$|f_i(t,x)|_i\leq m_i(t),$$
where $m_i\in L^2[a,b]$ and
$$r(L)=\frac{\pi^2}{(b-a)^2},$$
then system \eqref{system-of-bvps} has a solution.
\end{theorem}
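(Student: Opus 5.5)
We apply Theorem~\ref{critical-main} to the Hammerstein operator $T=G\circ N_f$ from \eqref{hammer}, acting on $X=\prod_{i=1}^N X_i$ with $X_i=L^2([a,b],E_i)$. The first step is to check the geometric hypotheses on the coordinate spaces. Since each $E_i$ is reflexive with normal structure, the Smith--Turett theorem and the reflexivity statement recalled above give that each $X_i$ is reflexive and has normal structure. We also record the matrix Lipschitz estimate
$$
\|T_i(u)-T_i(v)\|_i\le\sum_{j=1}^N a_{ij}\|u_j-v_j\|_j,\qquad a_{ij}=\mu_iL_{ij}.
$$
Here $\mu_i=(b-a)^2/\pi^2$, so $A=\frac{(b-a)^2}{\pi^2}L$. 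The hypothesis $r(L)=\pi^2/(b-a)^2$ then gives exactly $r(A)=1$.

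The second step is to produce a nonempty weakly compact convex set $K$ with $T(K)\subset K$. This is where the bound $|f_i(t,x)|_i\le m_i(t)$ is used, since the Lipschitz estimate alone gives no invariant bounded set when $r(A)=1$. For any $u\in X$ we have $|(T_iu)(t)|_i\le\int_a^b g(t,s)m_i(s)\,ds=:h_i(t)$, and $h_i\in L^2[a,b]$. We would set
$$
K:=\bigl\{u\in X:\ |u_i(t)|_i\le h_i(t)\ \text{for a.e. } t\in[a,b],\ i=1,\ldots,N\bigr\}.
$$
This set is convex and contains $0$. It is norm-closed, because an $L^2$-convergent sequence has an a.e.\ convergent subsequence. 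It is bounded, since $\|u_i\|_i\le\|h_i\|_{L^2}$. A bounded closed convex subset of the reflexive space $X$ is weakly compact, and the pointwise bound shows $T(X)\subset K$, so in particular $T(K)\subset K$. A simpler alternative is a product of closed balls of radii $\mu_i\|m_i\|_{L^2}$, using $\|T_iu\|_i\le\mu_i\|m_i\|$; either choice works.

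Finally, Theorem~\ref{critical-main} (or Corollary~\ref{uniformly-convex-critical} when the $E_i$ are uniformly convex) yields a fixed point $u\in K$ of $T$. By the equivalence noted before \eqref{hammer}, this is a strong solution in $\prod_i H^2\cap H^1_0$.

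The main obstacle is the well-definedness and measurability bookkeeping: $N_f$ must map $X$ into $X$ (Carathéodory plus \eqref{gc}), and $G$ must map into $H^2\cap H_0^1$ so that fixed points are genuine solutions. Both are already asserted in the setup, so little remains beyond the invariance check. It is worth noting that no compactness or weak continuity of $N_f$ is needed.
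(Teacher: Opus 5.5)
Your proposal is correct and follows the paper's proof. The paper also applies Theorem~\ref{critical-main} to $T=G\circ N_f$ on $X=\prod_i L^2([a,b],E_i)$, with $r(A)=\frac{(b-a)^2}{\pi^2}\,r(L)=1$ and with reflexivity and normal structure of the factors coming from the Smith--Turett result; the only difference is that the paper's invariant set is exactly your ``simpler alternative'' $K=\{u\in X:\ \|u_i\|_i\le (b-a)^2\pi^{-2}\|m_i\|_{L^2[a,b]}\}$, whereas your pointwise-dominated set $\{u:\ |u_i(t)|_i\le h_i(t)\ \text{a.e.}\}$ is a smaller subset of it that works equally well.
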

\begin{proof} For $i=1,...,N$, let $R_i=(b-a)^2\pi^{-2}\|m_i\|_{L^2[a,b]}$ and let
$$K=\{u\in X\mid \|u_i\|_i\leq R_i,\;i=1,...,N\}.$$
Then $K$ is convex and closed, hence weakly compact ($X$ is reflexive) and $T(X)\subset K$. Thus Theorem  \ref{critical-main} yields a solution.
\end{proof}

\begin{remark}\label{Volterra} The above fixed point approach should be contrasted with those corresponding to initial value problems. Suppose that $f$ satisfies the matrix Lipschitz condition \eqref{ml}. For
\[
\dot u(t)=f(t,u(t)),\;u(a)=u_0\in E,
\]
the associated integral operator has the Volterra form
\begin{equation}\label{triang}
T(u)(t)=u_0+\int_a^t f(s,u(s))\,ds,\; u\in X.
\end{equation}
Then $T=(T_1,...,T_N)$, where
\begin{equation}\label{triang1}T_i(u)=u_{0i}+\int_a^t f_i(s,u(s))\,ds,\; u\in X,\end{equation}
and
$$\|T_i(u)-T_i(v)\|_i\leq\sum_{j=1}^N(b-a)L_{ij}\|u_j-v_j\|_j,\; u,v\in X.$$
Take the norm $|\cdot|_\infty$ in $\R^N$. Then, for all $u,v\in X:=C([a,b],E)$,
$$\|T(u)-T(v)\|_\infty\leq (b-a)\lambda\|u-v\|_\infty,\; \text{where}\; \lambda:=|L|_\infty.$$
The Volterra form of integrals in \eqref{triang} and \eqref{triang1} allows us to use the  Bielecki modification (see \cite{Biel, Cord}) of the norm $\|\cdot\|_\infty$ in $X=C([a,b],E)$, i.e., for $u\in X$,
\begin{align*}\|u\|_\infty&=\Big|\big(\sup_{t\in [a,b]}|u_1(t)|_1,...,\sup_{t\in [a,b]}|u_N(t)|_N\big)\Big|_\infty\\&=\max_{1\leq i\leq N}\Big(\sup_{t\in[a,b]}|u_i(t)|_i\Big)=\sup_{t\in [a,b]}\Big(\max_{1\leq i\leq N}|u_i(t)|_i\Big).\end{align*}
Namely we consider
$$\|u\|_\infty^B:=\sup_{t\in [a,b]}e^{-\lambda(t-a)}\max_{1\leq i\leq N}|u_i(t)|_i,\; u\in X.$$
Then, for any $u,v\in X$,
$$\|T(u)-T(v)\|_\infty^B\leq \big(1-e^{-\lambda(b-a)}\big)\|u-v\|_\infty^B,$$
i.e. $T$ is a Banach contraction and in order to infer the existence of solutions no condition such as \eqref{warunek1} on the spectral radius $r(L)$ is needed.
\end{remark}

In Volterra-type problems, Bielecki renorming often helsp to  convert the associated operator directly into a Banach contraction, making additional spectral-radius assumptions unnecessary. From this viewpoint the need to apply the Perov or Browder--G\"ohde--Kirk theorem to  Volterra-type problems is rather limited.

In contrast, the Hammerstein operator $T$ associated with the boundary-value problem \eqref{bvp} is of Fredholm type: for every fixed $t$, integration is performed over the whole interval $[a,b]$. The causal triangular structure characteristic of Volterra operators is therefore absent, and the elementary one-sided exponential Bielecki renorming does not yield an arbitrarily small Lipschitz constant in the same automatic way; comp. \cite{Bess}.

\subsection{An application to periodic problems}
\label{sec:periodic}
Consider the following periodic problem
\begin{equation} \label{system-of+eqs:periodic-ode}
\dot u_i (t)=f_i(t,u_1(t), \ldots, u_N(t)),\ \ t\in [0,T], \ \ i = 1,\ldots, N,
\end{equation}
with $f_i:\mathbb R\times X_1\times \dots \times X_N \longrightarrow X_i$, $i=1,\dots, N$, where
$(X_i,\|\cdot\|_i)$ are reflexive Banach spaces, $i=1,\dots, N$.  Assume also that
$f_i$, $i=1, \ldots, N$, is continuous, has sublinear growth, i.e., for all $t\in\R$ and $x\in X$,
$$\|f_i(t,x_1,\dots,x_N)\|_i\leq a_i(t)+b_i\sup_{1\leq j\leq N}\|x_j\|_j,$$
where $a_i\in L^1_{loc}(\R)$ and $b_i\geq 0$,
and is $T$-periodic with respect to the first variable,
$$
f_i(t+T,x_1, \dots, x_N)=f_i(t,x_1, \ldots,x_N),\;\; t\in\mathbb R,\quad x_j \in X_j, \quad i, j =1,\ldots, N.
$$
Moreover we assume that the so-called {\em matrix one-sided Lipschitz estimate} is satisfied:  for all $i,j=1,...,N$ there exist functions
$$\beta_{ij}\in L^1[0,T],\;\;  \beta_{ij}(t)\geq 0\;\;\text{if}\; i\neq j,$$
such that, for any $i=1,...,N$, almost every $t\in[0,T]$ and all $\xi_j,\eta_j\in X_j$, $j=1,\ldots, N$,
\begin{equation}
\label{eq:diss-h}
\la \xi_i-\eta_i, f_i(t,\xi_1, \ldots, \xi_N)-f_i(t,\eta_1, \ldots, \eta_N) \ra_i \leq \sum_{j=1}^N\beta_{ij}(t)\|\xi_i-\eta_i\|_i\|\xi_j-\eta_j\|_j,
\end{equation}
where $\la\cdot,\cdot\ra_i$ stands for the (lower) semi-inner product associated to the norm $\|\cdot\|_i$ in $X_i$ -- see Section \ref{lumer}. This condition expresses dissipativity in the $i$-th component (observe that $\beta_{ii}$ is not necessarily nonnegative) and the coupling between different components.\\[0.3em]
\indent Let us now indicate how the matrix comparison principle can be used
to study periodic solutions of systems of ordinary differential
equations in product Banach spaces. Our approach is inspired by Browder~\cite{Browder1}.

Let $$X=\prod_{i=1}^N X_i,$$ and rewrite \eqref{system-of+eqs:periodic-ode} as
\begin{equation}
\label{eq:periodic-ode}
\dot u(t)=f(t,u(t)),
\end{equation}
where $f=(f_1, \dots, f_N):\mathbb R\times X\longrightarrow X$. Under our assumptions,  in view of Deimling~\cite[\S3.2,\,\S3.3 and \S3.4]{Dei}, for any $x\in X$, the Cauchy problem
\begin{equation}\label{cp}\dot u(t)=f(t,u(t)),\;\; u(0)=x,\end{equation}
admits a unique $C^1$-solution  $u(\cdot\,;x):[0,T]\longrightarrow X$. The associated Poincar\'e map  $P:X\to X$ is defined by
$$P(x):=u(T;x),\;\; x\in X.$$
By the $T$-periodicity of $f$, a solution $u(\cdot;x)$ extends to
a $T$-periodic solution of \eqref{eq:periodic-ode} if and only if $P(x)=x$.
Thus the periodic problem is reduced to a fixed-point problem for
the Poincar\'e map.\\[0.3em]
\indent  Let $x,y\in X$ and set
$$u(t)=u(t;x),\;\; v(t)=u(t;y),  \ \ t\in [0,T].
$$
For $i=1,\ldots,N$ define
$$ p_i(t):=\|u_i(t)-v_i(t)\|_i\;\; \text{and}\;\; p(t)=(p_1(t),...,p_N(t)),\;\; t\in [0,T].$$
Fix $i=1,...,N$. Since $u_i-v_i$ is differentiable and $\|\cdot\|_i$ is Lipschitz, we gather that $p_i$ is absolutely continuous and differentiable almost everywhere. According to Lemma \ref{diff} (iii), the left derivative $D_-p_i(t)$ exists for almost all $t$ and
\begin{equation}\label{pochodna}
p_i(t)D_-p_i(t)=\la u_i(t)-v_i(t),\dot u_i (t)- \dot v_i (t)\ra_i.
\end{equation}
In view of \eqref{eq:diss-h} we obtain
$$
D_- p_i(t)\leq \sum_{j=1}^N\beta_{ij}(t)p_j(t).
$$
In order to get it either we divide by $p_i(t)>0$ or if $p_i(t) = 0$, then, for almost every $t\in [0,T]$,
$$
D_- p_i (t) = [0, \dot u_i(t) - \dot v_i (t) ]_i = - \| \dot u_i (t) - \dot v_i(t)\|_i \leq 0 \leq
\sum_{j\neq i} \beta_{ij}(t) p_j(t)
$$
because $B(t)=[\beta_{ij}(t)]_{i,j=1}^N$ is a Metzler matrix (all of its off-diagonal entries are nonnegative). Therefore
\begin{equation}
\label{eq:matrix-dini-periodic}
D_-p(t)\le B(t)p(t)
\end{equation}
for almost all $t\in [0,T]$.\\[0.3em]

\indent Let $E_1$ be the full-measure set on which the left derivative exists and \eqref{eq:matrix-dini-periodic} holds, and let $E_2$ be the full-measure set on which $p$ is differentiable in the usual sense. For $t\in E_1\cap E_2,$ the ordinary derivative exists, and therefore necessarily
$D_-p(t)=p'(t).$ This implies that for almost all $t$
$$\dot p(t)\leq B(t)p(t).$$
In view of the comparison principle (see Section \ref{comparison} and Proposition \ref{compa})
\begin{equation}
\label{eq:p-comparison-periodic}
p(t)
\le
V(t,0)p(0),
\qquad
0\le t\le T,
\end{equation}
where  $V(t,s)$ is the fundamental matrix of
$$\frac{\partial}{\partial t}V(t,s) = B(t)V(t,s),\;\; V(s,s)=I.$$
Since $p(0)=\|x-y\|$ and $p(T)=\|u(T)-v(T)\|=\|u(T;x)-u(T;y)\|=\|P(x)-P(y)\|$ we get
\begin{equation}\label{zasa}
 \|P(x)-P(y)\|\leq A\|x-y\|, \ \ \mbox{ for all } \ \  x,y\in X,
\end{equation}
where $A:=V(T,0)$
is the monodromy matrix corresponding to $B$; $A\geq 0$ by Proposition \ref{compa}.  These observations allow one to apply directly the fixed-point results developed in the preceding sections.

\begin{theorem} If
\begin{enumerate}
\item[\em (i)] $r(A)<1$, or
\item[\em (ii)] $r(A)=1$, each space $X_i$, $i=1,...,N$, has normal structure, and there exists a nonempty weakly compact convex set $K\subset X$ such that $P(K)\subset K$,
\end{enumerate}
then the equation \eqref{eq:periodic-ode} has a $T$-periodic orbit, which is unique in case (i).
\end{theorem}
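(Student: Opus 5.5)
The theorem follows from the estimate \eqref{zasa} once we check that the Poincar\'e map $P$ satisfies the hypotheses of Perov's theorem in case (i) and of Theorem~\ref{critical-main} in case (ii). The only remaining task is to translate fixed points of $P$ back into $T$-periodic orbits.

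\emph{Case (i).} The space $X=\prod_{i=1}^N X_i$ with the vector-valued metric \eqref{vmetric} is a complete vector-valued metric space, since each $X_i$ is Banach. By \eqref{zasa}, $P:X\to X$ satisfies \eqref{Alip1} with $A=V(T,0)\geq0$, where nonnegativity comes from Proposition~\ref{compa}. Corollary~\ref{Perov1} then gives a unique $x_*\in X$ with $P(x_*)=x_*$. Uniqueness of the periodic orbit follows because any $T$-periodic solution $u$ satisfies $P(u(0))=u(0)$, so $u(0)=x_*$. The Cauchy problem \eqref{cp} has unique solutions, so $u=u(\cdot;x_*)$ on $[0,T]$, and by periodicity on all of $\R$.

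\emph{Case (ii).} The restriction $P|_K:K\to K$ is well defined by assumption and still satisfies $\|P(x)-P(y)\|\leq A\|x-y\|$ for $x,y\in K$. Every $X_i$ has normal structure, $K$ is nonempty, weakly compact and convex, and $r(A)=1\leq1$. Theorem~\ref{critical-main} therefore yields $x_*\in K$ with $P(x_*)=x_*$.

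In both cases it remains to pass from the fixed point to a periodic orbit. Set $u(t):=u(t-kT;x_*)$ for $t\in[kT,(k+1)T]$. By $T$-periodicity of $f$, each shifted piece solves \eqref{eq:periodic-ode}. The pieces agree at the junction points because $u(T;x_*)=x_*=u(0;x_*)$. Since $f$ is continuous, the derivatives also match at the junctions. Hence $u$ is a $C^1$ $T$-periodic solution on $\R$.

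\emph{Main obstacle.} The real content is the global Lipschitz estimate \eqref{zasa}, including the nonnegativity $A\geq0$, which the paper has already established via the comparison principle. What is left for this theorem is careful bookkeeping: completeness of $X$ in case (i) and invariance of $K$ under $P$ in case (ii).
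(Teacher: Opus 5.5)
Your proposal is correct and follows the paper's proof. The paper likewise invokes Perov's theorem for (i) and Theorem~\ref{critical-main} applied to $P|_K$ for (ii), both resting on the estimate \eqref{zasa}; your extra steps (uniqueness of the orbit via uniqueness for \eqref{cp}, and the periodic extension of $u(\cdot;x_*)$) spell out the correspondence between fixed points of $P$ and $T$-periodic solutions that the paper states just before the theorem.
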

\begin{proof}
The first case is covered by Perov's theorem, while the second one follows from Theorem \ref{critical-main}.
\end{proof}
\begin{remark}
The assumption concerning the invariant set $K$ is natural.
In applications it is usually obtained from an \emph{a priori} estimate,
a dissipativity condition, or an invariant-region argument.
\end{remark}
\begin{example}
Suppose each $X_i$, $i=1,...,N$, is reflexive, has normal structure and that there is a continuously Fr\'echet differentiable,
convex, and coercive functional $g:X\to\mathbb R$ such that
\[
Dg(u)f(t,u)\leq0
\]
for every $u\in X$ and $t\in[0,T]$. Choose
$c>\inf_X g$ and put
\[
K_c:=\{x\in X:g(x)\leq c\}.
\]
Then $K_c$ is nonempty, bounded closed and convex. Hence $K_c$ is weakly compact since $X$ is reflexive. If $x\in K_c$, the chain rule
gives
\[
\begin{aligned}
g(P(x))-g(x)
 &=g(u(T;x))-g(u(0;x))=\int_0^T Dg(u(s;x))\dot u(s;x)\,ds\\
 &=\int_0^T
   Dg(u(s;x))f(s,u(s;x))\,ds
 \leq0.
\end{aligned}
\]
It follows that $g(P(x))\leq g(x)\leq c$, and hence $P(K_c)\subset K_c$.
\end{example}
\begin{remark} If $B(t)\equiv B$ is constant, then $A=e^{TB}$.
Consequently,
$$
r(A)=r(e^{TB})=e^{T s(B)},
$$
where
$$s(B)=\max\{\operatorname{Re}\lambda:\lambda\in\sigma(B)\}$$
is the spectral bound of $B$. Hence
$$r(A)<1\;\Longleftrightarrow\;s(B)<0.$$
\indent The critical situation $r(A)=1$ corresponds to $s(B)=0.$ This illustrates clearly the role played by the comparison system: the existence theory for periodic solutions of the nonlinear equation is reduced to spectral information about a finite-dimensional
nonnegative monodromy matrix.
\end{remark}

\begin{example}
Consider
\begin{equation}
\label{eq:periodic-two-components}
\begin{cases}
\dot u_1=-\alpha_1(t)u_1+g_1(t,u_1,u_2),\\[1mm]
\dot u_2=-\alpha_2(t)u_2+g_2(t,u_1,u_2),
\end{cases}
\end{equation}
where for $i=1,2$, $\alpha_i:\R\to[0,\infty)$ is continuous and $T$-periodic, $g_i:\R\times X_1\times X_2\to X_i$, where $X_i$ is a Banach space, is continuous, $T$-periodic, satisfies  sublinear growth condition and, for any $\xi =(\xi_1,\x_2),\eta(\eta_1,\eta_2) \in X_1\times X_2$,
$$\|g_1(t,\xi)-g_1(t,\eta)\|_1\leq\beta_{11}(t)\|\xi_1-\eta_1\|_1+\beta_{12}(t)\|\xi_2-\eta_2\|_2,
$$
$$\|g_2(t,\xi)-g_2(t,\eta)\|_2\leq\beta_{21}(t)\|\xi_1-\eta_1\|_1+
\beta_{22}(t)\|\xi_2-\eta_2\|_2,$$
where $\beta_{ij}\in L^1[0,T]$, $\beta_{ij}\geq 0$ a.e. Property analogous to \eqref{eq:diss-h} for $f_i(t,\xi_1,\xi_2)=-\alpha_i(t)\xi_i+g_i(t,\xi_1,\xi_2)$, $i=1,2$, follows from Lemma \ref{diff} (i) and (ii).  Indeed, for $\xi=(\xi_1,\xi_2)$, $\eta=(\eta_1,\eta_2)$ and $\{i,j\}=\{1,2\}$, applying Lemma~\ref{diff}~(ii) with $x=\xi_i-\eta_i$, $\lambda=-\alpha_i(t)$ and $y=g_i(t,\xi)-g_i(t,\eta)$, and then Lemma~\ref{diff}~(i) together with the above estimate of $g_i$, we get, for almost every $t\in[0,T]$,
\begin{align*}
\la \xi_i-\eta_i,f_i(t,\xi)-f_i(t,\eta)\ra_i
&=-\alpha_i(t)\|\xi_i-\eta_i\|_i^2+\la \xi_i-\eta_i,g_i(t,\xi)-g_i(t,\eta)\ra_i\\
&\leq\bigl(-\alpha_i(t)+\beta_{ii}(t)\bigr)\|\xi_i-\eta_i\|_i^2+\beta_{ij}(t)\|\xi_i-\eta_i\|_i\|\xi_j-\eta_j\|_j.
\end{align*}%
Then the comparison matrix is
$$B(t)=\begin{bmatrix}
-\alpha_1(t)+\beta_{11}(t) & \beta_{12}(t)
\\[1mm]
\beta_{21}(t) & -\alpha_2(t)+\beta_{22}(t)
\end{bmatrix}.
$$
Thus, the existence  of a $T$-periodic solution can be
studied through the monodromy matrix $V(T,0)$ of the  positive linear comparison system
$$\dot z(t)=B(t)z(t).$$
This formulation makes explicit the competition between the
componentwise dissipation represented by the functions $\alpha_i$
and the coupling represented by the functions $\beta_{ij}$.
\end{example}

\appendix

\renewcommand{\thesection}{\Alph{section}}

\makeatletter
\def\@seccntformat#1{%
  \@ifundefined{#1@cntformat}%
    {\csname the#1\endcsname.\hspace{0.5em}}
    {\csname #1@cntformat\endcsname}
}
\newcommand{\section@cntformat}{}   
\makeatother
\section{Appendix}

This appendix collects auxiliary material used in the main text. We first recall formulas for subordinate matrix norms, collect several standard facts concerning the spectral radius and Perron--Frobenius theory, and summarize the Frobenius normal form for reducible matrices. We then state the comparison principle and recall relevant facts concerning semi-inner products in Banach spaces.

\subsection{Classical subordinate matrix norms}

We use the order, lattice norms and subordinate matrix norms introduced in Section~\ref{basic-notation}.
For the classical norms one has
$$|A|_1=\max_{1\leq j\leq N}\sum_{i=1}^N|a_{ij}|,\qquad |A|_\infty=\max_{1\leq i\leq N}\sum_{j=1}^N|a_{ij}|,
$$
while
$$|A|_2=\sqrt{\sup\sigma(A^TA)},$$
where $\sigma(A^TA)$ is the spectrum of $A^TA$ and $A^T$ is the transpose of $A$.

\subsection{Spectral radius}

The Gelfand formula states that
$$
r(A)
=
\lim_{n\to\infty}|A^n|_*^{1/n},
$$
where $|\cdot|_*$ is any subordinate matrix norm on $\R^N$. In
particular,
\begin{equation}\label{subo1}
r(A)\le |A|_*
\end{equation}
for every subordinate matrix norm.

\begin{remark}\label{gel}
The inequality \eqref{subo1} may be strict, even by a large margin. Thus the condition $r(A)<1$
does not imply that $|A|_*<1$ with respect to a given subordinate
matrix norm. This observation explains the role of the renorming
techniques developed in Section~\ref{strict-case}, where suitable equivalent lattice
norms are constructed in order to obtain the estimate
$|A|_*<1$.

For example, if
$$
A=\begin{bmatrix}
a&c\\
0&b
\end{bmatrix},
$$
then $r(A)=\max\{|a|,|b|\}$,  whereas
$$
|A|_1=\max\{|a|,|b|+|c|\},
\qquad
|A|_\infty=\max\{|a|+|c|,|b|\}.
$$
Hence, choosing $|c|$ sufficiently large, one obtains matrices for
which $r(A)$ is arbitrarily small while $|A|_p$, $1\leq p\leq\infty$, is arbitrarily large.
\end{remark}

\begin{theorem}\label{r(A)<1-app}
For a matrix $A\in\R^{N\times N}$ the following conditions are
equivalent.

\begin{enumerate}
\item[\rm(a)]
$r(A)<1$;

\item[\rm(b)]
$A^n\to0$ as $n\to\infty$ in any matrix norm;

\item[\rm(c)]
$A^nx\to0$ for every $x\in\R^N$;

\item[\rm(d)]
the Neumann series
$$
\sum_{k=0}^\infty A^k
$$
converges and
$$
(I-A)^{-1}
=
\sum_{k=0}^\infty A^k.
$$
\end{enumerate}
\end{theorem}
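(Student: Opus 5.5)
The plan is to prove the cycle of implications (a) $\Rightarrow$ (b) $\Rightarrow$ (c) $\Rightarrow$ (a), and then to attach (d) to (a) and (c).

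For (a) $\Rightarrow$ (b), I would use the Gelfand formula $r(A)=\lim_{n}|A^n|_*^{1/n}$ for a fixed subordinate norm. Choose $q$ with $r(A)<q<1$. Then $|A^n|_*\le q^n$ for all large $n$, so $A^n\to0$ in this norm. All norms on the finite-dimensional space $\R^{N\times N}$ are equivalent, so the convergence holds in any matrix norm.

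The implication (b) $\Rightarrow$ (c) is immediate from $|A^nx|_*\le|A^n|_*|x|_*$.

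For (c) $\Rightarrow$ (a), I would argue by contradiction. Suppose $\lambda\in\sigma(A)$ with $|\lambda|\ge1$, and let $v\in\mathbb C^N$ be an eigenvector. Write $v=x+iy$ with $x,y\in\R^N$. Since $A$ is real, $A^nv=A^nx+iA^ny\to0$ by (c). On the other hand $|A^nv|=|\lambda|^n|v|\not\to0$, which is a contradiction. The one point needing care in this step is the passage from the real hypothesis (c) to complex eigenvectors; the decomposition into real and imaginary parts handles it.

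For (a) $\Rightarrow$ (d), the same geometric bound $|A^k|_*\le q^k$ for large $k$ gives absolute convergence of $\sum_k A^k$. The telescoping identity
$$(I-A)\sum_{k=0}^{n}A^k=\sum_{k=0}^{n}A^k(I-A)=I-A^{n+1}$$
together with $A^{n+1}\to0$ then shows that the sum is the two-sided inverse of $I-A$.

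For (d) $\Rightarrow$ (c), convergence of the series forces its terms to tend to zero, so $A^k\to0$ and in particular $A^kx\to0$ for every $x$.

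Interpretation matters here: I read (d) as the statement that the series converges and its sum equals $(I-A)^{-1}$, with the inverse existing as part of the conclusion. If one only assumes that $I-A$ is invertible, the implication fails, as $A=2I$ shows.

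No step is a genuine obstacle. The main care points are the complex-eigenvector step in (c) $\Rightarrow$ (a) and the reading of (d) just described.
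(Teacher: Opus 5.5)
Your proof is correct and follows the route the paper points to. The paper gives no detailed argument, only the remark that the equivalences are classical and follow from the Gelfand formula, which is exactly the tool you use for (a)$\Rightarrow$(b) and (a)$\Rightarrow$(d); your eigenvector argument for (c)$\Rightarrow$(a), splitting into real and imaginary parts, and your reading of (d) as including convergence of the series, correctly close the cycle.
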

The equivalence of the above conditions is classical and follows, for
instance, from the Gelfand formula; see
Horn and Johnson~\cite{Horn} or Bhatia~\cite{Bhatia} or Rheinboldt and Vandergraft~\cite[Lemma 1]{Van}.

\subsection{Perron--Frobenius theory}

Nonnegative matrices possess remarkable spectral properties discovered
by Perron and Frobenius. We recall only those facts that are used in the
present paper. Standard references include
\cite{Horn, Varga, Marek}.

\begin{theorem}[Perron--Frobenius] \label{Per-Frob}
Let $A\ge0$.
\begin{enumerate}
\item[\rm(a)] The spectral radius $r(A)$ is an eigenvalue of both
$A$ and $A^T$ with nonnegative corresponding eigenvectors.
\item[\rm(b)] If $A$ is irreducible, then
\begin{enumerate}
\item[\rm(i)]
$r(A)>0$, except for the $1\times1$ zero matrix;
\item[\rm(ii)]
$r(A)$ is a simple eigenvalue of
$A$ and $A^T$;
\item[\rm(iii)]
there exist unique, up to positive scalar multiples,
right and left Perron eigenvectors
$u,w\gg0$, satisfying
$$
Au=r(A)u, \qquad A^Tw=r(A)w;
$$
\item[\rm(iv)]
every eigenvalue satisfying
$|\lambda|=r(A)$
is simple.
\end{enumerate}
\item[\rm(c)]
If $A^k\gg0$ for some integer $k\geq1$, then
$r(A)$ is the unique eigenvalue satisfying
$|\lambda|=r(A)$.
\end{enumerate}
\end{theorem}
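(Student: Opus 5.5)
We plan to prove the three parts in order, with (a) obtained from a resolvent argument, (b) from the positivity of $(I+A)^{N-1}$ together with the weighted norms of Theorem~\ref{irr}, and (c) from the equality case of the triangle inequality. For (a) we would avoid fixed point theorems and use the resolvent. For real $\lambda>r(A)$, Theorem~\ref{r(A)<1-app} applied to $A/\lambda$ gives
$$
R(\lambda):=(\lambda I-A)^{-1}=\sum_{k\ge0}\lambda^{-k-1}A^k\ \ge 0 .
$$
For complex $\mu$ with $|\mu|>r(A)$ the same series yields the entrywise bound $|(\mu I-A)^{-1}z|\le R(|\mu|)|z|$. Pick a peripheral eigenvalue $\mu_0$ with $|\mu_0|=r(A)$. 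Then $\|(\mu I-A)^{-1}\|\to\infty$ as $\mu\to\mu_0$ radially from outside, and the bound forces $\|R(\lambda)\|\to\infty$ as $\lambda\downarrow r(A)$. Set $x_\lambda:=R(\lambda)e/|R(\lambda)e|_1\ge0$, where $e=(1,\dots,1)^T$; note $|R(\lambda)e|_1\to\infty$ because $R(\lambda)\ge0$. The identity $(\lambda I-A)x_\lambda=e/|R(\lambda)e|_1\to0$ then shows that any limit point $u$ of $x_\lambda$ satisfies $Au=r(A)u$, $u\ge0$ and $|u|_1=1$. Applying the same argument to $A^T$ handles the transpose. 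I expect this step to be the main obstacle: one has to justify the blow-up of the resolvent at the real point $r(A)$ even when $A$ is reducible, and the displayed comparison inequality is exactly the tool for that.

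For (b) we would first prove that irreducibility implies $(I+A)^{N-1}\gg0$. This follows by a path or graph argument: for every $i\neq j$ there is a chain of nonzero entries from $i$ to $j$ of length at most $N-1$, since otherwise the set of indices reachable from $j$ would span an invariant coordinate subspace. With $u$ from (a) we get $(1+r)^{N-1}u=(I+A)^{N-1}u\gg0$, hence $u\gg0$, and likewise $w\gg0$; this gives the existence part of (iii). For (i), if $r(A)=0$ then $Au=0$ with $u\gg0$ forces $A=0$, and a zero matrix is irreducible only when $N=1$.

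Next comes geometric simplicity of $r$. Take any real eigenvector $v$ for $r$, and choose $t$ maximal such that $u-tv\ge0$. Then $u-tv$ is a nonnegative eigenvector with a zero coordinate, so the previous positivity argument makes it zero. Complex eigenvectors are treated through their real and imaginary parts. For algebraic simplicity, suppose $(A-rI)y=u$; multiplying by $w^T$ gives $0=w^Tu>0$, a contradiction. This establishes (ii) and the uniqueness in (iii).

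For (iv), let $Az=\lambda z$ with $|\lambda|=r$. Then $A|z|\ge r|z|$, and pairing with $w\gg0$ gives $w^T(A|z|-r|z|)=0$, so $A|z|=r|z|$ and therefore $|z|=cu\gg0$. If $z_1,z_2$ are two eigenvectors for $\lambda$, some combination of them vanishes at one coordinate; its modulus is still a multiple of $u$, so the combination is zero, and the eigenspace is one-dimensional. To exclude a nontrivial Jordan block we would use Theorem~\ref{irr}(iii): $|A^k|_{\infty,u}\le r^k$, so $|A^k|_{\infty,u}/r^k$ stays bounded, whereas a Jordan block at a peripheral eigenvalue would make it grow like $k$.

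For (c), let $Az=\lambda z$ with $|\lambda|=r$. Then $A^kz=\lambda^kz$, and the argument of (iv) applied to $A^k\gg0$, which is irreducible, gives $A^k|z|=r^k|z|=|A^kz|$. Since every row of $A^k$ is strictly positive, equality in the triangle inequality forces $z=e^{i\theta}|z|$. Then $\lambda|z|=A|z|\ge0$, so $\lambda=r$.
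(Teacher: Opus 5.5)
The paper does not prove this theorem; it only cites Horn--Johnson, Varga and Marek. Your proposal is a correct self-contained proof. It differs from the usual textbook route mainly in (a). Those texts either maximize the Collatz--Wielandt quotient $\min_i (Ax)_i/x_i$ over the simplex, or first treat $A\gg0$ and then pass to $A\ge0$ via the perturbation $A+\eps ee^T$ and continuity of eigenvalues. You instead let the Neumann series $R(\lambda)=\sum_{k\ge0}\lambda^{-k-1}A^k$ blow up as $\lambda\downarrow r(A)$. This handles reducible $A$ directly, uses only Theorem~\ref{r(A)<1-app}, and is the argument that carries over to positive operators on Banach lattices.

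The blow-up you flagged as the main obstacle is in fact quantitative. If $Av=\mu_0 v$ with $|\mu_0|=r(A)$, then $(\mu I-A)^{-1}v=(\mu-\mu_0)^{-1}v$. Taking $\mu=\lambda\mu_0/|\mu_0|$ (or $\mu=\lambda$ if $r(A)=0$), your entrywise domination gives $|R(\lambda)|_1\ge(\lambda-r(A))^{-1}$. Parts (b) and (c) follow standard lines. Your exclusion of Jordan blocks via bounded powers in $|\cdot|_{\infty,u}$ is the implication (ii)$\Rightarrow$(iii) of Proposition~\ref{os}.

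Three small repairs are needed.

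\begin{enumerate}
\item Do not cite Theorem~\ref{irr}(iii). In the paper its proof invokes Theorem~\ref{Per-Frob}, so the citation is circular as written. You do not need it: you have already built $u\gg0$ with $Au=ru$. The one-line estimate $|A^kz|\le A^k|z|\le |z|_{\infty,u}A^ku=r^k|z|_{\infty,u}\,u$, as in Proposition~\ref{irr1}, gives the bound directly. It holds for complex $z$ too, which matters because the Jordan vectors of a non-real peripheral eigenvalue are complex.
\item Dividing by $r^k$ requires $r>0$. By (i) the only exception is the $1\times1$ zero matrix, for which (iv) is trivial.
\item In the geometric-simplicity step, replace $v$ by $-v$ if necessary so that $v$ has a positive coordinate. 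Otherwise $\{t: u-tv\ge0\}$ is unbounded above and no maximal $t$ exists.
\end{enumerate}
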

For proofs and further information see Horn and Johnson~\cite{Horn}, Varga~\cite{Varga},
or Marek~\cite{Marek}.
\medskip

Recall that a permutation matrix $P$ is determined by a permutation $p$ through $Pe_i=e_{p(i)}$; it is clear that $P^{-1}=P^T$.  With the definition in Section~\ref{basic-notation}, $A$ is reducible if and only if there exists a permutation matrix $P$ such that
$$
P^{-1}AP= \begin{bmatrix}
A_1&B\\
0&A_2
\end{bmatrix},
$$
where $A_1$ and $A_2$ are square matrices of positive orders.
If one of the diagonal blocks $A_1$ or $A_2$ is still reducible,
the similar decomposition can be applied to that block.
Repeating the procedure finitely many times yields the following canonical block decomposition.

\begin{theorem}[Frobenius normal form]\label{fnf}
\label{appendix-frobenius}
Let $A\in\R^{N\times N}$. Then there exists a permutation matrix
$P$ such that
$$
P^{-1}AP=\begin{bmatrix}
A_{11}&A_{12}&\cdots&A_{1m}\\
0&A_{22}&\cdots&A_{2m}\\
\vdots&\ddots&\ddots&\vdots\\
0&\cdots&0&A_{mm}\end{bmatrix},
$$
where every diagonal block $A_{ii}$ is either irreducible or a
$1\times1$ zero matrix. Moreover,
$$
\sigma(A)=\bigcup_{i=1}^m\sigma(A_{ii}),\qquad r(A)=
\max_{1\le i\le m}r(A_{ii}).
$$
If, in addition, $B=P^{-1}AP=[b_{ij}]$, then
\begin{equation}\label{perm}
b_{ij}=a_{p(i)p(j)},
\qquad i,j=1,\ldots,N,\end{equation}
where a permutation $p\in S_N$ determines $P$.
\end{theorem}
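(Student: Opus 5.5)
We argue by strong induction on $N$. The base case and the irreducible case are immediate: then $P=I$ and $m=1$. In particular every $1\times1$ matrix is irreducible under the definition of Section~\ref{basic-notation}, since for $N=1$ there is no nonempty proper subset $J$. Before the induction we record the entry formula \eqref{perm}, which is purely computational and is used to translate invariance into zero patterns. If $Pe_j=e_{p(j)}$, then $P^{-1}=P^T$ and
$$
b_{ij}=e_i^TP^TAPe_j=(Pe_i)^TA(Pe_j)=e_{p(i)}^TAe_{p(j)}=a_{p(i)p(j)}.
$$

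\textbf{Reducible step.} Suppose $A$ is reducible, so $\operatorname{span}\{e_j:j\in J\}$ is $A$-invariant for some nonempty proper $J$. Invariance means $Ae_j\in\operatorname{span}\{e_k:k\in J\}$ for $j\in J$, that is, $a_{ij}=0$ whenever $i\notin J$ and $j\in J$. Let $k=|J|$ and choose a permutation $p$ mapping $\{1,\dots,k\}$ onto $J$ and $\{k+1,\dots,N\}$ onto the complement of $J$. By \eqref{perm}, $b_{ij}=a_{p(i)p(j)}=0$ for $i>k\ge j$. Hence
$$
P^{-1}AP=\begin{bmatrix}C_1&D\\0&C_2\end{bmatrix},
$$
with square blocks $C_1$, $C_2$ of orders $k,N-k<N$.

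By the induction hypothesis there are permutation matrices $Q_1,Q_2$ bringing $C_1$ and $C_2$ to block upper triangular form whose diagonal blocks are irreducible or $1\times1$ zero blocks. Put $Q=\operatorname{diag}(Q_1,Q_2)$; it is a permutation matrix with $Q^{-1}=\operatorname{diag}(Q_1^{-1},Q_2^{-1})$. Then
$$
(PQ)^{-1}A(PQ)=\begin{bmatrix}Q_1^{-1}C_1Q_1&Q_1^{-1}DQ_2\\0&Q_2^{-1}C_2Q_2\end{bmatrix}.
$$
This matrix is block upper triangular with the concatenated list of diagonal blocks, which gives the required form with permutation matrix $PQ$. (A $1\times1$ diagonal block $[a]$ with $a\neq0$ counts as irreducible, so the dichotomy ``irreducible or $1\times1$ zero'' is automatic.)

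\textbf{Spectral statements.} Similarity preserves the characteristic polynomial. For a block upper triangular matrix, $\det(\lambda I-B)=\prod_{i=1}^m\det(\lambda I-A_{ii})$, by repeated Laplace/Schur expansion along the zero lower-left blocks. Hence $\sigma(A)=\bigcup_i\sigma(A_{ii})$, and taking maxima of moduli gives $r(A)=\max_i r(A_{ii})$.

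The only point needing care is the bookkeeping in the reducible step. One must check that invariance produces the zero block in the lower-left rather than the upper-right corner; this follows from the column condition $a_{ij}=0$ for $j\in J$, $i\notin J$, once $J$ is placed first. One must also check that the block-diagonal second permutation does not destroy that zero block. Nonnegativity of $A$ is not used anywhere.
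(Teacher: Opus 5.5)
Your proof is correct and follows the paper's route: a permutation similarity turns reducibility into a $2\times2$ block upper triangular form, and the splitting is repeated on the diagonal blocks. You organize this as a strong induction, check that $\operatorname{diag}(Q_1,Q_2)$ preserves the zero block, and verify the entry formula and the determinant factorization behind the spectral claims, which the paper only sketches or cites as standard.
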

The Frobenius normal form is obtained by simultaneously permuting the
rows and columns of $A$. The diagonal blocks correspond to the
strongly connected components of the directed graph associated with
$A$, ordered topologically. This decomposition is the basis of the
recursive argument used in Section~\ref{recursive}.

\subsection{Comparison principle}\label{comparison}
\begin{proposition}[Comparison principle]\label{compa}
Let $B\in L^1(0,T;\mathbb R^{N\times N})$
be a Metzler matrix field, that is, $b_{ij}(t)\geq 0$
for $i\neq j$ for almost every $t\in[0,T]$. Let $V:\{(t,s)\mid 0\leq s\leq t\leq T\}\to\R^{N\times N}$ denote the evolution matrix associated with the problem
$$\dot z(t)=B(t)z(t).$$
Then $V(t,s)\geq 0,\; 0\leq s\leq t\leq T$. If $p:[0,T]\to \R^N$ is absolutely continuous and satisfies
$$ \dot p(t)\leq B(t)p(t)$$
componentwise for almost every $t\in[0,T]$, then
$$p(t)\leq V(t,0)p(0),\;\;t\in[0,T].$$
\end{proposition}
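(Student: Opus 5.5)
The plan is to prove the two assertions in order: first nonnegativity of the evolution matrix, obtained by a scalar shift that turns $B$ into a nonnegative matrix field; then the inequality for $p$, obtained from the variation of constants formula.

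\emph{Nonnegativity of $V$.} Put $c(t):=\max_{1\le i\le N}|b_{ii}(t)|$. Then $c\in L^1(0,T)$, and $C(t):=B(t)+c(t)I$ satisfies $C(t)\ge0$ for almost every $t$, because $B$ is Metzler. Let $W(t,s)$ be the evolution matrix of $\dot z=C(t)z$. Since $c(t)I$ commutes with everything, a direct differentiation shows
$$V(t,s)=e^{-\int_s^t c(\tau)\,d\tau}\,W(t,s).$$
The matrix $W$ is given by the Peano--Baker series
$$W(t,s)=I+\sum_{k\ge1}\int_{s\le\tau_k\le\cdots\le\tau_1\le t}C(\tau_1)\cdots C(\tau_k)\,d\tau_k\cdots d\tau_1 .$$
This series converges absolutely, since its $k$-th term is bounded in norm by $\bigl(\int_s^t|C|\bigr)^k/k!$. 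Every term is an integral of products of nonnegative matrices, so $W(t,s)\ge0$, and hence $V(t,s)\ge0$. Uniqueness of Carath\'eodory solutions with $L^1$ coefficients identifies this series with the evolution matrix.

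\emph{The comparison inequality.} Set $r(t):=B(t)p(t)-\dot p(t)$. By hypothesis $r\in L^1$ and $r\ge0$ a.e. Fix $t$ and consider $s\mapsto V(t,s)p(s)$ on $[0,t]$. I use the adjoint identity $\partial_s V(t,s)=-V(t,s)B(s)$ for a.e.\ $s$; it follows by differentiating $V(t,s)V(s,t_0)=V(t,t_0)$ in $s$. Because $V(t,\cdot)$ and $p$ are both absolutely continuous, their product is absolutely continuous, and the product rule gives
$$\frac{d}{ds}\bigl[V(t,s)p(s)\bigr]=V(t,s)\bigl(\dot p(s)-B(s)p(s)\bigr)=-V(t,s)r(s)\quad\text{a.e.}$$
Integrating from $0$ to $t$ yields
$$p(t)=V(t,0)p(0)-\int_0^tV(t,s)r(s)\,ds.$$
The integrand is nonnegative, since $V(t,s)\ge0$ and $r\ge0$. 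This gives $p(t)\le V(t,0)p(0)$.

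\emph{Main obstacle.} The algebra is routine; the difficulty is justifying regularity when $B$ is only $L^1$. Two points need care. First, $V(t,\cdot)$ must be absolutely continuous in the second variable and satisfy the adjoint equation almost everywhere. Second, the product rule must hold for products of absolutely continuous matrix and vector functions. I would settle the first point either from the series representation, which gives absolute continuity in both variables directly, or by writing $V(t,s)=V(t,0)V(s,0)^{-1}$ and differentiating the inverse.
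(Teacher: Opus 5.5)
Your proof is correct, but it takes a different route from the paper's.

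The paper reduces both assertions to one cited positivity result, Walter's theorem: if $u$ is absolutely continuous, $u(0)\geq0$ and $\dot u\geq B(t)u$ a.e. with $B$ Metzler, then $u\geq0$. It applies this first to $u(t)=V(t,s)x$ with $x\geq0$, and then to $u=V(\cdot,0)p(0)-p$.

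You instead prove nonnegativity of $V$ directly, using the shift $C=B+cI\geq0$ and the Peano--Baker series. You then get the inequality from the Duhamel identity $p(t)=V(t,0)p(0)-\int_0^tV(t,s)r(s)\,ds$, where $r=Bp-\dot p\geq0$.

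What each buys:
\begin{enumerate}
\item[(i)] The paper's proof is shorter, but it is only as self-contained as the citation.
\item[(ii)] Yours uses linearity essentially and is fully elementary. It also gives an explicit formula for the gap $V(t,0)p(0)-p(t)$. In fact it contains the linear case of Walter's theorem itself: apply the same identity to any $u$ with $\dot u\geq Bu$ and $u(0)\geq0$.
\end{enumerate}

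The price of your route is the regularity bookkeeping you correctly flag. You need absolute continuity of $V(t,\cdot)$, the adjoint equation $\partial_sV(t,s)=-V(t,s)B(s)$, and invertibility of $V(s,0)$. Invertibility follows, for instance, from Liouville's formula, or by solving $\dot Z=-ZB$, $Z(0)=I$, and checking $Z\,V(\cdot,0)\equiv I$. All of these go through for $L^1$ coefficients as you indicate.
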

\noindent The result stated above is a cornerstone of the theory of positive and cooperative linear systems. The fact that a Metzler matrix field $B$ generates a nonnegative evolution matrix $V$ is the linear, time-varying version of the well-known property that the matrix exponential of a Metzler matrix is nonnegative. The subsequent comparison inequalities for the differential inequalities are classical results from the theory of differential inequalities for quasimonotone (or cooperative) systems and may be found in many textbooks (see e.g. \cite{Laks1}). In order to make the paper self-contained we propose a short proof.
\begin{proof} We shall refer to \cite[Theorem 1, p.270]{Walter}. It says (in our context) that if $u:[0,T]\to\R^N$ is absolutely continuous, $u(0)\geq 0$, $\dot u(t)\geq B(t)u(t)$ almost everywhere, then $u(t)\geq 0, \ \ t\in [0,T]$.\\
\indent First let $x\geq 0$ and put $u(t)=V(t,s)x$ for $T\geq t\geq s\geq 0$. Then
$$\dot u(t)=B(t)u(t)\;\; \text{and}\;\;  u(s)=x\geq 0.$$
Walter's Theorem 1 gives $V(t,s)x=u(t)\geq 0$. Taking $x=e_j$, $j=1,\ldots,N$, shows that every column of $V(t,s)$ is nonnegative. Hence $V(t,s)\geq 0$.\\[0.3em]
\indent Now let $y(t):=V(t,0)p(0)$ and set $u(t)=y(t)-p(t)$. Then $u(0)=0$, and
$$
\dot u(t)=B(t)y(t)-\dot p(t)\geq B(t)y(t)-B(t)p(t)=B(t)u(t).
$$
Walter's theorem yields $u(t)\ge0$, i.e.
$p(t)\le V(t,0)p(0)$ as desired. \end{proof}

\subsection{Lumer semi-inner products}\label{lumer}
Let $X$ be a Banach space with norm $\|\cdot\|$.
For $x,y\in X$, define the \emph{(left) Kato bracket}
\begin{equation}
[x,y]
:=
\lim_{h\to 0^-}
\frac{\|x+hy\|-\|x\|}{h},
\end{equation}
and the associated (lower) \emph{semi-inner product in the sense of Lumer}
\begin{equation}\label{si}
\langle x,y\rangle
:=
\|x\|\,[x,y] .
\end{equation}
See \cite[Section 1.2]{Laks2} or \cite[\S3.1]{Dei} for the general theory.
If $X$ is a Hilbert space, then the semi-inner product thus defined coincides with the standard scalar product in $X$.\\[0.3em]
\indent The bracket $[\cdot,\cdot]$ measures the one-sided directional derivative of the norm $\|\cdot\|$, and it will be used in the sequel to express dissipativity and monotonicity conditions for operators acting in $X$.\\[0.3em]
\indent We record some basic properties of $[\cdot,\cdot]$ -- see \cite[Lem. 1.2.2, Cor. 1.2.1]{Laks2} (see also \cite[Lemma 3.2]{Dei}).
\begin{lemma}\label{diff} (i) For $x,y\in X$, $\la x,y\ra\leq \|x\|\|y\|$.

(ii) For $x,y\in X$ and $\lambda\in\R$, $[x,\lambda x+y]=\lambda\|x\|+[x,y]$; consequently $\la x,\lambda x+y\ra=\lambda\|x\|^2+\la x,y\ra$.

(iii) If a function $x:(a,b]\to X$ is left differentiable at $t\in (a,b]$, then so is $n:=\|x(\cdot)\|$
and $\dot n_-(t)=[x(t),\dot x_-(t)]$. Therefore, in view of \eqref{si},
$$n(t)\, \dot n_-(t)=\la x(t),\dot x_-(t)\ra.$$

\end{lemma}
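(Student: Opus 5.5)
The plan is to derive all three properties from one observation: for fixed $x,y\in X$ the function $\phi(h):=\|x+hy\|$ is convex and Lipschitz on $\R$ with constant $\|y\|$. For a convex function the difference quotient $h\mapsto(\phi(h)-\phi(0))/h$ is nondecreasing on $(-\infty,0)$. Since it is bounded by $\|y\|$ in absolute value, the limit as $h\to0^-$ exists. So the Kato bracket $[x,y]$ is well defined and finite; this is the first step, and every later step uses it implicitly.

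For (i), I would use the triangle inequality $\bigl|\|x+hy\|-\|x\|\bigr|\le |h|\,\|y\|$. It shows that every difference quotient lies in $[-\|y\|,\|y\|]$, hence $[x,y]\le\|y\|$. Multiplying by $\|x\|\ge0$ and using \eqref{si} gives $\la x,y\ra\le\|x\|\|y\|$.

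For (ii), the case $x=0$ is trivial, so I would fix $\lambda$ and consider $h<0$ small enough that $1+h\lambda>0$. I would then factor
$$\|x+h(\lambda x+y)\|=(1+h\lambda)\,\|x+ky\|,\qquad k:=\frac{h}{1+h\lambda}.$$
Subtracting $\|x\|$ and dividing by $h$ splits the quotient into two terms:
$$\frac{\|x+h(\lambda x+y)\|-\|x\|}{h}=\lambda\|x+ky\|+\frac{k}{h}\cdot\frac{\|x+ky\|-\|x\|}{k}.$$
As $h\to0^-$ we have $k\to0^-$ and $k/h\to1$. The first term therefore tends to $\lambda\|x\|$ and the second to $[x,y]$. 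Multiplying by $\|x\|$ gives the semi-inner product version.

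For (iii), left differentiability means $x(t+h)=x(t)+h\,\dot x_-(t)+o(h)$ as $h\to0^-$. Since the norm is $1$-Lipschitz, $\|x(t+h)\|-\|x(t)+h\dot x_-(t)\|=o(h)$. Hence
$$\frac{n(t+h)-n(t)}{h}=\frac{\|x(t)+h\dot x_-(t)\|-\|x(t)\|}{h}+o(1)\to[x(t),\dot x_-(t)].$$
Multiplying by $n(t)=\|x(t)\|$ gives the stated identity. The only delicate point overall is the existence of the one-sided limit in the definition of the bracket, and convexity of $\phi$ settles it. The rest is bookkeeping of $o(h)$ terms and of the reparametrization $k=h/(1+h\lambda)$.
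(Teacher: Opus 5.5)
Your argument is correct. The paper does not prove this lemma itself but only cites Lakshmikantham--Leela and Deimling, and what you wrote is the standard self-contained proof behind those citations: convexity of $h\mapsto\|x+hy\|$ gives the one-sided limit and $|[x,y]|\le\|y\|$, the factorization $x+h(\lambda x+y)=(1+h\lambda)(x+ky)$ with $k=h/(1+h\lambda)$ gives (ii), and the $1$-Lipschitz bound on the $o(h)$ remainder gives (iii). Your version also makes explicit that the limit defining $[x,y]$ exists, which the paper's definition silently assumes. The separate case $x=0$ in (ii) is unnecessary, since the factorization works for every $x$.
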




\end{document}